\renewcommand\thmcontinues[1]{Continued}
\newcommand*\textmathversion{\csname textmv@\math@version\endcsname}
\newcommand*\textmv@normal{m}
\newcommand*\textmv@bold{b}
\newcommand{\ve}[2][]{\ensuremath{\boldsymbol{\mathrm{#2}}}_{#1}}
\newcommand{\vet}[2][]{\ensuremath{\smash{\boldsymbol{\mathrm{#2}}^{\!\top}_{#1}}}}
\newcommand{\vd}[2][]{\ensuremath{\dot{\boldsymbol{\mathrm{#2}}}_{#1}}}
\newcommand{\ma}[2][]{\ensuremath{\boldsymbol{\mathrm{#2}}}_{#1}}
\newcommand{\mat}[2][]{\ensuremath{\boldsymbol{\mathrm{#2}}^{\!\top}_{#1}}}
\newcommand{\md}[2][]{\ensuremath{\dot{\boldsymbol{\mathrm{#2}}}}_{#1}}
\DeclareMathOperator{\skews}{\ensuremath{\mathrm{skew}}}
\DeclareMathOperator{\grad}{\ensuremath{\mathrm{grad}}}
\newcommand{\R}{\ensuremath{\mathds{R}}}
\newcommand{\C}{\ensuremath{\mathds{C}}}
\newcommand{\N}{\ensuremath{\mathds{N}}}
\newcommand{\SOT}{\ensuremath{\mathsf{SO}(3)}}
\newcommand{\SO}{\ensuremath{\mathsf{SO}(n)}}
\newcommand{\St}{\ensuremath{\mathsf{St}(p,n)}}
\newcommand{\ie}{\textit{i.e.}, }
\newcommand{\eg}{\textit{e.g.}, }
\newcommand{\mtr}{\hspace{-0.3mm}\ensuremath{^\top}}
\newcommand{\diff}{\ensuremath{\mathrm{d}}}
\newcommand{\inv}{\ensuremath{^{-1}}}
\newcommand\raiseT[2]{\setbox0\hbox{$#1{#2}$}\raise\dp0\box0}
\newcommand{\G}{\ensuremath{\mathcal{G}}}
\newcommand{\V}{\ensuremath{\mathcal{V}}}
\newcommand{\E}{\ensuremath{\mathcal{E}}}
\newcommand{\M}{\ensuremath{\mathcal{M}}}
\DeclareMathOperator{\im}{\ensuremath{\mathrm{Im}}}
\newcommand{\Ni}{\ensuremath{\mathcal{N}_i}}
\newcommand{\raisemath}[1]{\mathpalette{\raisem@th{#1}}}
\newcommand{\raisem@th}[3]{\raisebox{#1}{$#2#3$}}
\newcommand{\ts}[2][]{\ensuremath{\mathsf{T}_{#2}#1}}
\newcommand{\AGAS}{\textsc{agas}}
\definecolor{kthbluergb}{RGB}{25,84,166}
\definecolor{kthbluecmyk}{cmyk}{1,0.55,0,0}
\definecolor{kthblueA}{RGB}{25,84,166}
\definecolor{kthblueB}{RGB}{46,124,192}
\definecolor{kthblueC}{RGB}{112,153,209}
\definecolor{kthblueD}{RGB}{164,186,225}
\definecolor{kthblueE}{RGB}{211,220,241}
\newcommand\smallO{
	\mathchoice
	{{\scriptstyle\mathcal{O}}}
	{{\scriptstyle\mathcal{O}}}
	{{\scriptscriptstyle\mathcal{O}}}
	{\scalebox{.7}{$\scriptscriptstyle\mathcal{O}$}}
}
\newcounter{counter} 
\newtheorem{theorem}[counter]{Theorem}
\newtheorem{lemma}[counter]{Lemma}
\newtheorem{proposition}[counter]{Proposition}
\newtheorem{corollary}[counter]{Corollary}
\newtheorem{definition}[counter]{Definition}
\newtheorem{example}[counter]{Example}
\newtheorem{myalgorithm}[counter]{Algorithm}
\newcounter{parentnumber}
\DeclareMathOperator{\conc}{\ensuremath{\ast}}
\DeclareMathOperator{\bigconc}{\ensuremath{\Asterisk}}
\begin{document}
%
\title{Synchronization on Riemannian manifolds: \\Multiply connected implies multistable}


%
%
%

\author{Johan~Markdahl
\thanks{Johan Markdahl (markdahl@kth.se) is with the Luxembourg Centre for Systems Biomedicine at the University of Luxembourg.}
\thanks{Manuscript received Month XY, 2019; revised Month XY, 2019.}}

%
%

\markboth{Journal of \LaTeX\ Class Files,~Vol.~14, No.~8, August~2015}%
{Shell \MakeLowercase{\textit{et al.}}: Bare Demo of IEEEtran.cls for IEEE Journals}
%



\maketitle

\begin{abstract}


This note concerns the evolution of multi-agent systems on networks over Riemannian manifolds. The motion of each agent is governed by the gradient descent flow of a disagreement function that is a sum of (squared) distances between pairs of communicating agents. Two metrics are considered: geodesic distances and chordal distances for manifolds that are embedded in an ambient Euclidean space. We show that networks which, roughly speaking, are dominated by a large cycle yield a multistable systems if the manifold is multiply connected or contains a closed geodesic that is of locally minimum length in a space of closed  curves.  This result summarizes previous results on the stability of splay or twist state equilibria of the Kuramoto model on $\mathcal{S}^1$ and its generalization, the quantum sync model on $\mathsf{SO}(n)$. It also extends them to the Lohe model on $\mathsf{U}(n)$.


\end{abstract}

\begin{IEEEkeywords}
Synchronization, Consensus, Agents and autonomous systems, Network analysis and control, Nonlinear systems, Optimization.
\end{IEEEkeywords}

%
\IEEEpeerreviewmaketitle

%
%
%
%

\section{Introduction}

\label{sec:intro}


\IEEEPARstart{T}{his} note studies how the stability properties of  multi-agent systems on networks over a Riemannian manifold relates to its geometry and topology. 
The focus is on two algorithms which we refer to as refer to as geodesic consensus and chordal consensus \cite{tron2013riemannian,sarlette2009consensus,aydogdu2017opinion}. Both algorithms are gradient descent flows of quadratic disagreement functions, defined in terms of geodesic distances and chordal distances respectively. We show that if the manifold is multiply connected, then both algorithms yield multistable closed-loop systems under  communication topologies that are dominated by one large cycle. The equilibrium configurations are characterized by local coherence between neighboring agents but global incoherence of the system as a whole. This result is interesting since it summarizes previous results in mathematical physics on the multistability of the Kuramoto model and some of its high-dimensional generalizations \cite{wiley2006size,deville2018synchronization}. Moreover, it suggests some rough guidelines for the control problem of which synchronization algorithm to execute on a given manifold. 


To gain an intuitive understanding of these results, imagine a network consisting of a single cycle where the agents are beads that have been  threaded on a string. The string consists of piecewise geodesic curves, each connecting a pair of neighboring agents. Any  configuration where the beads are equidistantly distributed over the string is refered to as a splay state. We can select initial conditions that guarantee the string to be a continuous curve forwards in time. If the manifold is simply connected, then a continuous shortening of the string to a point results in consensus. If the manifold is multiply connected, then, for strings that are not homeomorphic to a point, reaching synchronization requires two neighboring agents to be threaded away from each other. This goes against the design principles of both the chordal and geodesic consensus algorithms. Instead of reaching consensus, the agents will tend to a set of splay states that is asymptotically stable. 

There are several differences between the two algorithms \cite{aydogdu2017opinion}. Chordal consensus requires the Riemannian manifold $\M$ to be embedded in an ambient Euclidean space whereby the Euclidean distance metric can be used. Geodesic consensus can be defined intrinsically without reference to an embedding space. The algorithms are identical when $\M$ is an Euclidean space. For agent configurations $(x)_{i=1}^N\in\M^N$ such that all agents are close to their neighbors, the algorithms can be expected to behave similarly since short chordal distances approximate geodesics. In practice, chordal consensus is prefered over geodesic consensus since chordal distances are easier to calculate than geodesics and squared geodesic distances are non-smooth even on nice manifolds like the circle \cite{conway1996packing}.

Consider the Kuramoto model on networks over the circle in the special case that all frequencies (drift terms) are equal. It is well-known that cycle graphs yield multistability \cite{wiley2006size,dorfler2014synchronization}. They result in a splay or twisted state where the phases of the agents are spread equidistantly over the circle. The chordal consensus algorithm extends the Kuramoto model with equal frequencies to Riemannian manifolds. Generalized twist states are unstable on the high-dimensional Kuramoto model on Stiefel manifolds $\St$ for which $p\leq2n/3-1$ \cite{markdahl2018prx}. However, generalized twist states are stable on $\SO$ \cite{deville2018synchronization}, which is a submanifold of $\mathsf{St}(n,n)$. This apparent disparity is partly explained by our result that multiply connectedness implies multistability. Indeed, the circle and $\SO$ are multiply connected whereas $\St$ is simply connected for $p\leq n-2$. We introduce splay states for geodesic consensus as equidistant partitions of closed geodesics and show that they are asymptotically stable.

The geodesic and chordal consensus algorithms are multistable, wherefore {\it ad-hoc} control designs that yield  almost global asymptotical stable (\AGAS) sync on manifolds were proposed \cite{sarlette2009consensus,tron2012intrinsic}. However, those algorithms are more demanding in terms of computation and sensing. The author showed that the chordal consensus algorithm yields \AGAS{} sync on some specific manifolds \cite{markdahl2018tac,markdahl2018prx}. This paper shows that a manifold being simply connected is a necessary condition for the geodesic and chordal algorithm to yield \AGAS{} sync. However, as we show by  counter-example, simple connectedness is not sufficient. As such the paper provides the following rough guideline for \AGAS{} sync: if the manifold is simply connected then  the geodesic and chordal consensus algorithms can be considered. Otherwise, an {\it ad-hoc} algorithm is preferable.

The main contribution of this note is summarized as follows: (i) we introduce generalized splay states which are equilibria of the geodesic consensus system and show their asymptotical stability under a condition on the geometry of $\M$, (ii) we show that the geodesic and chordal consensus algorithms are multistable for a certain class of graphs under a condition on the topology of $\M$. These are the first results on the geodesic and chordal consensus algorithms (in their full generality \cite{tron2013riemannian,sarlette2009consensus,aydogdu2017opinion}) which concerns equilibria different from consensus. Moreover, they summarize several known results. Multistability of geodesic consensus on $\SOT$ has been shown by simulation \cite{tron2012intrinsic}. Multistability of the chordal consensus has  been established in special cases such as $\mathcal{S}^1$ \cite{wiley2006size} and $\mathsf{SO}(n)$ \cite{deville2018synchronization}. This paper unifies such results and extends them to other systems, most notably to the Lohe model on $\mathsf{U}(n)$ \cite{lohe2010quantum}.

\section{Preliminaries}

\label{sec:problem}

\noindent Let $(\M,g)$ be a complete Riemannian manifold. The set $\M$ is a real, smooth manifold and the metric tensor $g_x$ is an inner product on the tangent space $\ts[\M]{x}$ at $x$. A \emph{closed} manifold is compact and without boundary. All closed manifolds are complete. A manifold $\M$ is \emph{simply connected} if each closed curve can be continuously deformed to a point. A \emph{multiply connected} manifold $\M$ is path connected but contains at least one closed curve which cannot be continuously deformed to a point. See \cite{carmo1992riemannian,jost2008riemannian} for more details on  Riemannian geometry.

The \emph{arc length} of any curve $\gamma:[a,b]\rightarrow\M$ is
\begin{align}
l(\gamma):=\int_{a}^b g_\gamma(\dot{\gamma},\dot{\gamma})^\frac12\diff t.\label{eq:l}
\end{align}
Let $\Gamma(\M)\subset\M$ denote the set of piecewise smooth curves on $\M$. The length of a geodesic curve is the \emph{geodesic distance}
\begin{align*}
d_g(x,y):=\inf\{l(\gamma)\,|\,\gamma\in\Gamma(\M),\,\gamma(a)=x,\gamma(b)=y\}.
\end{align*}
A curve $\gamma(a)=x$, $\gamma(b)=y$ of minimal length is a geodesic (up to parametrization) from $x$ to $y$. Completeness ensures that at least one geodesic exists between any pair of points. The \emph{concatenation} $\gamma=\gamma_1\conc\gamma_2$ of $\gamma_1:[a,b]\rightarrow\M$ and $\gamma_2:[b,c]\rightarrow\M$ with $\gamma_1(b)=\gamma_2(b)$ is the curve  $\gamma|_{[a,b]}=\gamma_1$, $\gamma|_{[b,c]}=\gamma_2$. In this paper, whenever we concatenate two or more curves, it is assumed that the parametrizations match up. 




Let $\Lambda(\M)$ denote the set of \emph{closed curves} on $\M$, \ie curves $\gamma\in\Gamma(\M)$ such that $\gamma(b)=\gamma(a)$. Let $\im\gamma\subset\M$ denote the image of a curve. Introduce an equivalence relation ${\sim}$ on $\Lambda(\M)$, where $\gamma\sim\lambda$ if $\im\lambda=\im\gamma$ and $l(\lambda)=l(\gamma)$. Denote
\begin{align}
[\gamma]&:=\{\lambda\in\Lambda(\M)\,|\,\im\lambda=\im\gamma,\,l(\lambda)=l(\gamma)\},\nonumber\\
\Lambda(\M)/{\sim}&:=\{[\gamma]\subset\Lambda(\M)\,|\,\gamma\in\Lambda(\M)\}.\label{eq:quotient}
\end{align}
The \emph{Hausdorff distance} between two sets $\mathcal{Y},\mathcal{Z}\subset\mathcal{X}$, where $(\mathcal{X},d)$ is a metric space, is
\begin{align*}
d_H(\mathcal{Y},\mathcal{Z}):=\max\big\{\!\adjustlimits\sup_{y\in\mathcal{Y}}\inf_{z\in\mathcal{Z}}d(y,z),\,\adjustlimits\sup_{z\in\mathcal{Z}}\inf_{y\in\mathcal{Y}}d(y,z)\big\}.
\end{align*}
We let $(\M,d_g)$ be the metric space. Define another metric
\begin{align*}
d_{\sim}(\gamma,\lambda):=d_H([\gamma],[\lambda])+|l(\gamma)-l(\lambda)|
\end{align*}
such that $d_{\sim}(\gamma,\lambda)=0$ for all $\lambda\in[\gamma]$. Then $(\Lambda(\M)/{\sim},d_{\sim})$ is a metric space which admits the notions of optimization.

\begin{proposition}[Klingenberg \cite{klingenberg1978lectures}]\label{th:simply}
Assume that the Riemannian manifold $(\M,g)$ is closed and multiply connected. Then $(\mathcal{M},g)$ contains a closed geodesic that is a local minimizer of the curve length function $l$ over $\Lambda(\M)/{\sim}$.
\end{proposition}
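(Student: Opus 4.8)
The plan is to apply the direct method of the calculus of variations to the length functional $l$ restricted to a nontrivial free homotopy class of loops. Since $(\M,g)$ is multiply connected, there is a closed curve $\gamma_0\in\Lambda(\M)$ that is not null-homotopic; let $\mathcal{C}\subseteq\Lambda(\M)$ be the free homotopy class of $\gamma_0$ (all closed curves freely homotopic to it) and put $L:=\inf\{l(\gamma)\mid\gamma\in\mathcal{C}\}$. The first step is to check that $L>0$. Because $\M$ is closed, its injectivity radius $\rho:=\inj(\M)$ is strictly positive; any loop of length $<2\rho$ has every point within distance $<\rho$ of its basepoint, hence lies inside a geodesically convex metric ball and is contractible. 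Consequently $L\ge 2\rho>0$, and in particular no element of $\mathcal{C}$ can degenerate to a point.

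The second step is to extract a minimizer. Take a minimizing sequence $(\gamma_k)\subset\mathcal{C}$ with $l(\gamma_k)\to L$, reparametrized on $[0,1]$ with constant speed $l(\gamma_k)$, so that $(\gamma_k)$ is a uniformly Lipschitz family into the compact space $\M$. By the Arzel\`a--Ascoli theorem a subsequence converges uniformly to a (Lipschitz) closed curve $\gamma_\infty$, and lower semicontinuity of arc length under uniform convergence gives $l(\gamma_\infty)\le L$. Moreover $\gamma_\infty$ is freely homotopic to $\gamma_0$: two closed curves that are sufficiently $C^0$-close are freely homotopic, as one sees by covering both by finitely many totally normal (convex) balls and joining corresponding points by the short geodesics between them. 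Hence $\gamma_\infty\in\mathcal{C}$ and $l(\gamma_\infty)=L$.

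The third step is to upgrade the minimizer to a closed geodesic. If $\gamma_\infty$ failed to be a geodesic, there would be a parameter $t_0$ and a short subarc $\gamma_\infty|_{[t_0-\delta,t_0+\delta]}$ contained in a convex normal ball on which $\gamma_\infty$ differs from the unique minimizing geodesic between its endpoints; replacing that subarc by this geodesic gives a curve still lying in $\mathcal{C}$ (the modification is a homotopy supported inside the ball) but of strictly smaller length, contradicting minimality. The standard regularity argument then makes $\gamma_\infty$ a smooth, nonconstant (since $L>0$) closed geodesic. Finally, $\gamma_\infty$ is a local minimizer of $l$ over $\Lambda(\M)$ in the topology induced by $d_g$: every closed curve in a sufficiently small neighborhood of $\gamma_\infty$ is $C^0$-close to it, hence freely homotopic to $\gamma_0$, hence of length at least $L=l(\gamma_\infty)$.

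The main obstacle is the compactness/lower-semicontinuity package of the second step together with the claim that $C^0$-closeness implies free homotopy; both are classical but rely on the local convexity (existence of totally normal neighborhoods) guaranteed by completeness and on the positive lower bound for the injectivity radius coming from compactness. A more constructive alternative would be to invoke Birkhoff's curve-shortening process, which deforms any loop within its free homotopy class and subconverges to a closed geodesic realizing the infimal length in that class; the length-minimization argument above is essentially its variational counterpart.
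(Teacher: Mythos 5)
The paper does not actually prove this proposition; it is stated as a known result and discharged entirely by the citation to Klingenberg, so there is nothing in the source to compare your argument against line by line. What you give is the classical Cartan--Klingenberg existence proof (direct method in a nontrivial free homotopy class), and it is essentially correct: the lower bound $L\geq 2\rho>0$ via the injectivity radius of the compact manifold, the Arzel\`a--Ascoli extraction of a uniform limit from a constant-speed minimizing sequence, lower semicontinuity of $l$, preservation of the free homotopy class under uniform limits via the geodesic homotopy inside totally normal balls, and the local-replacement argument upgrading the minimizer to a smooth closed geodesic (including at the closing point, since you parametrize over the circle) are all standard and correctly assembled. Your final step --- that $\gamma_\infty$ is a \emph{local} minimizer because any sufficiently $C^0$-close loop is freely homotopic to $\gamma_0$ and hence has length at least $L$ --- is the right reading of the conclusion, though you should note it establishes local minimality in the uniform ($C^0$) topology on loops rather than with respect to the minimax set distance the paper formally defines on $\Lambda(\M)$; the latter is evidently not what the paper intends (under it, curves ``close'' to $\gamma$ would have to be contained in a small ball), and every use of the proposition in the paper (torus, peanut, capsule examples) is consistent with the $C^0$ reading. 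Your closing remark that Birkhoff curve shortening gives a constructive alternative is accurate. One cosmetic quibble: for contractibility of the ball containing a short loop you only need the ball to be the diffeomorphic image of a ball in $T_p\M$ under $\exp_p$, i.e.\ radius below the injectivity radius; invoking geodesic convexity is stronger than necessary but harmless.
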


\begin{example}
The manifold $\smash{\R^2}\backslash\{\ve{0}\}$ is multiply connected, yet it does not contain a closed geodesic of locally minimum length. Proposition \ref{th:simply} does not apply since $\smash{\R^2}\backslash\{\ve{0}\}$ is open. 
\end{example}


\begin{example}\label{exa:cont}
The torus is multiply connected. A closed curve around the torus tube cannot be continuously deformed to a point. If that curve is a circle, then it is a local minimizer of $l$ in the space of closed curves. The sphere $\mathcal{S}^2$ is simply connected. The closed geodesics on $\mathcal{S}^2$ are great circles, \eg the equator. The equator is not a local minimizer of $l$ since there are closed curves of constant latitude arbitrarily close to the equator that are shorter than it, see Fig. \ref{fig:manifolds}. On the capsule, in the regions where the cylinder and hemispheres meet, there are curves which are saddle points of $l$. They are minimizers of $l$ on the cylinder but not on the hemispheres. On both the torus and the peanut there is a single closed curve which is a strict local minimizer of $l$. The torus is multiply connected but the peanut is simply connected. Simple connectedness does not rule out the existence of a closed curve of minimum length.
\end{example}

\begin{figure}[htb!]
\centering	\includegraphics[width=0.45\textwidth]{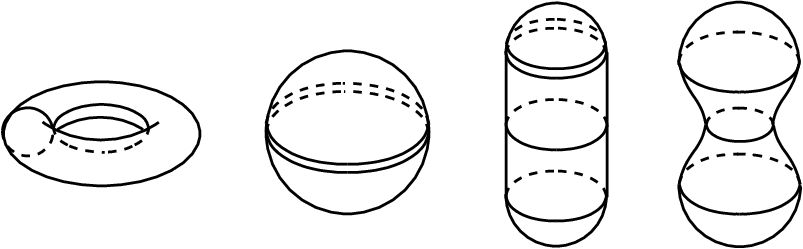}
\caption{A torus, a sphere, a capsule, and a peanut.}
\label{fig:manifolds}
\end{figure}

Assume that the manifold is \emph{geodesically complete} which implies that there exists at least one geodesic path between any two points $x,y\in\M$. Moreover, assume that for some \emph{open neighborhood} of $x$,  $\mathcal{B}_\varepsilon(x):=\{z\in\M\,|\,d_g(x,z)<\varepsilon\}$, there exists a unique geodesic from $x$ to each $y\in\mathcal{B}_\varepsilon(x)$. The largest value $\varepsilon>0$ for which this holds is the \emph{injectivity radius} $r(x)$ at $x$. We assume that $R:=\inf_{x\in\M}r(x)>0$. 

Let $\exp_x:\ts[\M]{x}\rightarrow\M$ denote the \emph{exponential map}. Given a point $x\in\M$ and a tangent vector $v\in\ts[\M]{x}$,  $\exp_x(v)$ yields the point $y\in\M$ that lies at a distance $g_x(v,v)^\frac12$ from $x$ along the geodesic through $x$ with $v$ as a tangent vector. Let $\mathcal{S}_x\subset\ts[\M]{x}$ be the largest open, path connected set containing $0$ on which $\exp_x$ is a diffeomorphism. Denote $\mathcal{X}_x:=\exp_x(\mathcal{S}_x)\subset\M$. Note that the injectivity radius $r:\M\rightarrow\R$ is the radius of the largest geodesic ball $\mathcal{B}_{r}(x)$ contained in $\mathcal{X}_x$. The inverse of the exponential map is well-defined on $\mathcal{X}_x$. It is the logarithm map  $\smash{\log_x}:\mathcal{X}_x\rightarrow\ts[\M]{x}$ given by $\log_x:\exp_x(v)\mapsto v$.

The directional derivative of a smooth function $f:\M\rightarrow\R$ at $x\in\M$ along $v\in\ts[\M]{x}$ is given by $\tfrac{\diff}{\diff t}f(\gamma(t))|_{t=0}$, where $\gamma\in\Gamma$ satisfies $\gamma(0)=x$, $\dot{\gamma}(0)=v$. The \emph{intrinsic gradient} of $f$ is defined as the vector $\grad f(x)\in\ts[\M]{x}$ which satisfies
\begin{align*}
g_x(\grad f(x),v)=\tfrac{\diff}{\diff t}f(\gamma(t))|_{t=0}
\end{align*}
for all $v\in\ts[\M]{x}$. In particular, it holds that $\grad d^2_g(x,y)=-2\log_x(y)$ for all $y\in\mathcal{X}_x$.


\section{Gradient flows on Riemannian manifolds}

\noindent 
Let $V:\M\rightarrow\R$ be a $C^2$ function on a Riemannian manifold. The \emph{gradient descent flow} of $V$ on $\M$ is given by
\begin{align}\label{eq:first}
\dot{x}=-\grad V(x),
\end{align}
for any $x(0)\in\M$. The solutions $x(t)$, $t\in\R$, to \eqref{eq:first} are refered to as \emph{flow lines}. Note that the \emph{equilibria} of \eqref{eq:first} are the \emph{critical points} of $V$, \ie the points for which $\grad V(x)=0$. It may hence be advantageous to adopt an optimization perspective. The relation between the stability properties of the equilibria of a gradient descent flow and the critical points of the potential function $V$ is somewhat complicated, so we need to define precise notions to specify it. See \cite{absil2006stable} for more details about these issues for gradient descent flows on $\R^n$.


\begin{definition}\label{def:minimizer}
A set $\mathcal{S}\subset\mathcal{X}$ of minimizers of a real function $f:\mathcal{X}\rightarrow\R$ from a metric space $(\mathcal{X},d_H)$ is said to be a \emph{local minimizer} if for some $\varepsilon>0$ there is an open neighborhood  $\mathcal{B}_\varepsilon(\mathcal{S})=\{x\in\M\,|\,d_H(x,\mathcal{S})<\varepsilon\}$ such that $f|_{\mathcal{S}}\leq f(x)$ for all $x\in\mathcal{B}_\varepsilon(\mathcal{S})$. Moreover, if the inequality is strict for all $x\in\mathcal{B}_\varepsilon(\mathcal{S})\backslash\mathcal{S}$, then $\mathcal{S}$ is said to be a \emph{strict local minimizer}.
\end{definition}

\begin{definition}\label{def:critical}
A set $\mathcal{S}\subset\mathcal{X}$ of minimizers of a real function $f:\mathcal{X}\rightarrow\R$ from a metric space $(\mathcal{X},d_H)$ is said to be \emph{isolated critical} if for some $\varepsilon>0$ there is an open neighborhood  $\mathcal{B}_\varepsilon(\mathcal{S})=\{x\in\M\,|\,d_H(x,\mathcal{S})<\varepsilon\}$ such that $\mathcal{B}_{\varepsilon}(\mathcal{S})\backslash\mathcal{S}$ is void of critical points.
\end{definition}

\begin{example}[continues=exa:cont]
Consider the manifolds in Fig. \ref{fig:manifolds}. The torus and peanut have closed geodesics that are strict minimizers of $l$ on $\Lambda(\M)/{\sim}$. The sphere and capsule do not.
\end{example}



\begin{proposition}\label{prop:lines}
Let $\M$ be closed and take any $V:\M\rightarrow\R$ that is $\smash{C^2}$. The flow $\dot{x}=-\grad V$ has a unique solution $x(t)\in\M$ which exists for all $t\in\R$. The potential function $V$ decreases along flow lines of $\dot{x}=-\grad V$. For any flow line, $\grad V(x(t))\rightarrow 0$ as $t\rightarrow\pm\infty$.
\end{proposition}

\begin{proof}
See \cite{jost2008riemannian}.
\end{proof}




\begin{proposition}[Lyapunov theorem]\label{prop:not}
Let $\M$ be closed and take any $V:\M\rightarrow\R$ that is $\smash{C^2}$. Let $\mathcal{S}$ be a compact set of local minimizers of $V$. If $\mathcal{S}$ is a strict local minimizer, then $\mathcal{S}$ is a Lyapunov stable equilibrium set of $\dot{x}=-\grad V$. If $\mathcal{S}$ is also isolated critical, then it is asymptotically stable.
\end{proposition}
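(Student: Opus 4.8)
The plan is to use $V$ itself as a Lyapunov function, after normalizing so that $V\equiv 0$ on $\mathcal{S}$. This is legitimate: every point of $\mathcal{S}$ lies in the open set $\mathcal{B}_\varepsilon(\mathcal{S})$ of Definition \ref{def:minimizer} and is a local minimizer, so $V$ takes one and the same value on $\mathcal{S}$ and $\grad V=0$ there; in particular $\mathcal{S}$ is genuinely an invariant equilibrium set. Along any flow line of $\dot x=-\grad V$ one computes $\tfrac{\diff}{\diff t}V(x(t))=g_x(\grad V(x),\dot x)=-g_x(\grad V(x),\grad V(x))\le 0$, so $V$ is non-increasing along solutions, and strictly decreasing away from critical points.

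For Lyapunov stability, fix an arbitrary neighborhood $\mathcal{U}$ of $\mathcal{S}$ and choose $\delta\in(0,\varepsilon]$ small enough that the closed tube $\{x\in\M\,|\,d_g(x,\mathcal{S})\le\delta\}$ is contained in $\mathcal{U}$. By completeness and the Hopf--Rinow theorem this closed tube is compact, hence so is the distance sphere $\Sigma_\delta=\{x\in\M\,|\,d_g(x,\mathcal{S})=\delta\}$, which is nonempty for $\delta$ small (the case $\mathcal{S}=\M$ being trivial). Since $\mathcal{S}$ is a strict local minimizer, $c:=\min_{x\in\Sigma_\delta}V(x)>0$. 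Then $\Omega:=\{x\in\mathcal{B}_\delta(\mathcal{S})\,|\,V(x)<c\}$ is an open neighborhood of $\mathcal{S}$, and it is positively invariant: if $x(0)\in\Omega$ then $V(x(t))\le V(x(0))<c$ for all $t\ge 0$, while $t\mapsto d_g(x(t),\mathcal{S})$ is continuous and cannot attain the value $\delta$ (where $V\ge c$), so $x(t)\in\mathcal{B}_\delta(\mathcal{S})\subset\mathcal{U}$ for all $t\ge 0$; the solution exists for all time by Proposition \ref{prop:picard}. This is Lyapunov stability.

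For asymptotic stability, add the hypothesis that $\mathcal{B}_\varepsilon(\mathcal{S})\setminus\mathcal{S}$ contains no critical points (Definition \ref{def:critical}), and take $x(0)\in\Omega$ with $\delta<\varepsilon$ as above. The solution remains in the compact set $\{x\,|\,d_g(x,\mathcal{S})\le\delta\}$, so $V(x(t))$ is bounded; by Proposition \ref{prop:lines} the alternative $|V(x(t))|\to\infty$ is excluded, hence $\grad V(x(t))\to 0$ as $t\to\infty$. The $\omega$-limit set $\omega(x(0))$ is therefore a nonempty, compact, invariant subset of $\{x\,|\,d_g(x,\mathcal{S})\le\delta\}\subset\mathcal{B}_\varepsilon(\mathcal{S})$ on which $\grad V$ vanishes, by continuity of $\grad V$; since the only critical points in $\mathcal{B}_\varepsilon(\mathcal{S})$ lie in $\mathcal{S}$, we get $\omega(x(0))\subset\mathcal{S}$. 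A routine compactness argument (otherwise some sequence $x(t_k)$ stays at distance $\ge\eta>0$ from $\mathcal{S}$, and a convergent subsequence produces a limit point in $\omega(x(0))\setminus\mathcal{S}$, a contradiction) upgrades this to $d_g(x(t),\mathcal{S})\to 0$. Combined with Lyapunov stability, this yields asymptotic stability of $\mathcal{S}$.

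The only non-routine part, and hence the main thing to get right, is the handling of the distance function $d_g(\cdot,\mathcal{S})$ to a compact set on a general complete Riemannian manifold: that closed tubes are compact (Hopf--Rinow), that distance spheres $\Sigma_\delta$ are nonempty and compact, and that $t\mapsto d_g(x(t),\mathcal{S})$ is continuous along solutions. Everything else is the classical Lyapunov/LaSalle argument, transported to the manifold setting by invoking Propositions \ref{prop:picard} and \ref{prop:lines} in place of their Euclidean counterparts.
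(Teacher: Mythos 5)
Your proposal is correct and follows essentially the same route as the paper's proof: the set $\Omega$ you construct from the sublevel value $c=\min_{\Sigma_\delta}V$ is exactly the paper's set $\mathcal{A}$ built from $\alpha=\inf_{\partial\mathcal{B}_\varepsilon(\mathcal{S})}V$, the trapping argument via monotonicity of $V$ along flow lines is the same, and the asymptotic-stability step via Proposition \ref{prop:lines} plus the isolated-critical hypothesis matches the paper's. You merely supply more detail (Hopf--Rinow compactness of the tube, the explicit $\omega$-limit set argument) than the paper does.
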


\begin{proof}
By definition of $\mathcal{S}$ being a strict local minimizer, for any sufficiently small $\varepsilon\in[0,\infty)$, there exists a closed geodesic ball $\mathcal{B}_\varepsilon(\mathcal{S})\subset\M^N$ of radius $\varepsilon$ such that $V|_{\mathcal{S}}<V(x)$ for all $x\in{\mathcal{B}_\varepsilon(\mathcal{S})\backslash\mathcal{S}}$. Let $\partial\mathcal{B}_\varepsilon(\mathcal{S})$ denote the boundary of $\mathcal{B}_\varepsilon(\mathcal{S})$. Let $\alpha:=\inf_{y\in\partial\mathcal{B}_\varepsilon(\mathcal{S})}V(y)>V|_{\mathcal{S}}\geq0$. Let
\begin{align*}
\mathcal{A}:=\{y\in\mathcal{B}_\varepsilon(\mathcal{S})\,|\,V(y)<\alpha\}.
\end{align*}
There exists a $\delta>0$ such that $\mathcal{B}_\delta(\mathcal{S})\subset\mathcal{A}$. The potential function $V$ decreases along flow lines by Proposition \ref{prop:lines}. If a flow line starting in $\mathcal{B}_\delta(\mathcal{S})$ passed through $\partial\mathcal{B}_\varepsilon(\mathcal{S})$, then $V$ would have had to have increased, a contradiction.

Assume that $\mathcal{S}$ is also an isolated critical set. The flow lines are contained in a compact set $\mathcal{B}_\varepsilon(\mathcal{S})$. Each line approaches a set of critical points by Proposition \ref{prop:lines}. All of the critical points in $\mathcal{B}_\varepsilon(\mathcal{S})$ belong to $\mathcal{S}$ by assumption.\end{proof}


A subset $\mathcal{S}\subset\M$ has \emph{measure zero} if for every smooth chart $(\mathcal{U},\varphi)$ in an atlas of $\M$, the set $\varphi(\mathcal{U}\cap\mathcal{S})$ has Lebesgue measure zero in $\R^n$. A subset $\mathcal{U}\subset\M$ has \emph{strictly positive} Riemannian measure if it does not have measure zero. Let $\Phi(t,x_0)$ denote the \emph{flow} of \eqref{eq:first}, \ie the solution $x(t)$ of  \eqref{eq:first} at time $t$ for the initial condition $x(0)=x_0$. Define $\Omega(\mathcal{S})$
\begin{align*}
\Omega(\mathcal{S}):=\{\omega\in\M\,|\,\omega=\lim_{t\rightarrow\infty}\Phi(t,x_0),\,x_0\in\mathcal{S}\}.
\end{align*}

\begin{definition}[Multistable]\label{def:multistable}
Let $\M$ be a closed manifold. The gradient flow \eqref{eq:first} is \emph{multistable} if there exists two sets $\mathcal{S}_1,\mathcal{S}_2\subset\M$ of strictly positive Riemannian measure such that no points of $\Omega(\mathcal{S}_1)$ can be path connected to $\Omega(\mathcal{S}_2)$ via a path in $\Omega(\M)$.\end{definition}



\section{Synchronization on Riemannian manifolds}

\noindent Consider a network of $N$ interacting agents. The interaction topology is modeled by an undirected graph $\mathcal{G}:=(\mathcal{V},\mathcal{E})$ where the nodes $\V:=\{1,\ldots,N\}$ represent agents and an edge $e=\{i,j\}\in\E$ indicates that agent $i$ and $j$ can communicate. Assume that the graph is connected, whereby there is at least an indirect path of communication between any two agents. In this note, we mainly focus on the cycle graph
\begin{align}\label{eq:HN}
\mathcal{H}_N:=(\V,\E):=(\{1,\ldots,N\},\{\{i,i+1\}\,|\,i\in\V\}).
\end{align}
For notational convenience we use modular arithmetic $N+1\equiv1\,(\mathrm{mod}\,N)$ when adding the indices of $\mathcal{H}_N$, \ie $\{1,N\}\in\E$.

The state $x_i$ of agent $i$ belongs to the manifold $\M$. The states are grouped together in a tuple, $x:=(x_i)_{i=1}^N\in\M^N$.  The \emph{consensus manifold} $\mathcal{C}$ of a  Riemannian manifold $(\mathcal{M},g)$ is the diagonal space of $\M^N$ given by the set
\begin{align}\label{eq:C}
\mathcal{C}:=\{(x_i)_{i=1}^N\in\mathcal{M}^N\,|\,x_i=x_j,\,\forall\,\{i,j\}\in\E\}, 
\end{align}
where it is assumed that $\G$ is connected. The consensus set is a Riemannian manifold; in fact, it is diffeomorphic to $\mathcal{M}$ by the map $\smash{(x_i)_{i=1}^N}\mapsto x_1$. The terms synchronization and consensus are interchangeable in this note: 



\begin{definition}
The gradient flow \eqref{eq:first} is said to \emph{synchronize}, or equivalently, to reach \emph{consensus}, if $\lim_{t\rightarrow\infty}d_H(x(t),\mathcal{C})=0$.
\end{definition}

This note mainly concerns the local behaviour of a multi-agent system where the distance $d_g(x_i,x_j)$ between any pair of interacting agents can be made arbitrarily small by increasing $N$. As such, we work on subsets of the manifold where all geodesics are uniquely defined. Under these circumstances, define the notion of a \emph{closed broken geodesic}:
\begin{definition}[Closed broken geodesic]\label{def:broken} Let $x=(x_i)_{i=1}^N\in\M^N$ denote agent positions and $\gamma_i:[t_i,t_{i+1}]\rightarrow\M$ be a geodesic curve with $\gamma_i(t_i)=x_i$, $\gamma_i(t_{i+1})=x_{i+1}$ (using the convention $N+1=1\,(\emph{mod}\, N)$). By a \emph{closed broken geodesic} interpolating $x$ we refer to the closed curve $\gamma_x:[t_1,t_N]\mapsto\M$ given by the concatenation $\gamma_x=\bigconc_{i=1}^{N}\gamma_i$.
\end{definition}	
%
\noindent A closed broken geodesic can be intuitively grasped by imagining the agents as being beads on string, see Section \ref{sec:intro}. Closed broken geodesics will be used to represent cycle networks $\mathcal{H}_N$.

\subsection{Two gradient descent flows}

\noindent This paper concerns two synchronization algorithms, which we refer to as geodesic consensus  and chordal consensus. Both algorithms are gradient descent flows of \emph{disagreement functions}, \ie potential functions $W:\M^N\rightarrow\R$ of the form
\begin{align}\label{eq:W}
W(x):=\tfrac12\sum_{\{i,j\}\in\E} w_{ij}\int_0^{d^2(x_i,x_j)}f(s)\,\diff s
\end{align}
where $w_{ij}\in(0,\infty)$ are weights, $w_{ij}=w_{ji}$, $d$ is a metric, $f:\R\rightarrow[0,1]$ is a \emph{smoothing function} to be specifed, and $\G=(\V,\E)$ is an undirected graph which represents the network. The geodesic algorithm uses the geodesic distance $d_g$ as metric whereas the chordal algorithm uses the chordal distance $d_c$, \ie the Euclidean distance in an ambient Euclidean space.

The consensus-seeking multi-agent system on $\mathcal{M}^N$ obtained from the gradient descent flow $W$ is 
\begin{align}
\dot{x}=-\grad W(x), \quad (\dot{x}_i)_{i=1}^N=(-\nabla_i W(x))_{i=1}^N,\label{eq:descent}
\end{align}
where $x_i(0)\in\mathcal{M}$ and $\nabla_i$ denotes the gradient with respect to $x_i$ obtained by holding the other agent states constant. Agent $i$ does not have access to $W$, but can calculate
\begin{align*}
W_i(x_i,x_{j_1(i)},\ldots,x_{j_{|\mathcal{N}_i|}(i)}):=\tfrac12\sum_{j\in\Ni}w_{ij}\int_0^{d^2(x_i,x_j)}f(s)\diff s,
\end{align*}
where $\mathcal{N}_i:=\{j\in\V\,|\,\{i,j\}\in\E\} =:\{x_{j_1(i)},\ldots,x_{j_{|\mathcal{N}_i|}(i)}\}$ is the set of \emph{neighbors} of $i$. Symmetry of $d_g$ gives $W=\tfrac12\sum_{i\in\V}W_i$ whereby it follows that $\nabla_i W_i=\nabla_i W$. 

From a control design perspective, we assume that the dynamics of each agent take the form $\dot{x}_i=u_i$ with $u_i\in\ts[\mathcal{M}]{x_i}$. Furthermore, we assume that agent $i$ is equipped with sensors that allow it to calculate $W_i$ in some small neighborhood around its current position. It follows that agent $i$ can also calculate $u_i:=-\nabla_i W_i$. Note that it requires more information and is more computationally demanding to calculate $d_g$ compared to $d_c$ \cite{aydogdu2017opinion}.

Non-uniqueness of geodesic curves results in a loss of smoothness and the gradient of $V(x):=\smash{W(x)|_{d=d_g}}$ being undefined. Following \cite{aydogdu2017opinion}, in Section \ref{sec:geodesic} we design  $f$ to make $V$ a $C^2$ function everywhere. Note that this issue does not arise in chordal consensus. For chordal consensus we choose $f(x)=1$ whereby the potential simplifies as
\begin{align*}
U(x):=W|_{d=d_c,f=1}=\tfrac12\sum_{\{i,j\}\in\E} w_{ij}{d_c^2(x_i,x_j)}.
\end{align*}
The potential $U$ is smooth for any smooth manifold. The chordal metric is sometimes preferable over the geodesic metric for this reason, see \cite{conway1996packing} for further discussion.



Another distinction can be made between \emph{intrinsic consensus} and \emph{extrinsic consensus}, referring to the concepts of intrinsic and extrinsic geometry.  By intrinsic consensus, we refer to a consensus algorithm defined on a manifold $\M$ that is an abstract topological space. By extrinsic consensus we refer to an algorithm that is defined on a manifold $\mathcal{M}$ that is embedded in an ambient Euclidean space $\R^{n\times m}$. The geodesic consensus algorithm can be either intrinsic or extrinsic. The chordal consensus algorithm can only be used in an extrinsic setting where the chordal distance is defined.

\subsection{Geodesic consensus}\label{sec:geodesic}

\noindent Recall that $R$ denotes the injectivity radius of $\M$. Define
\begin{align}\label{eq:V}
V(x)&:=\tfrac12\!\!\sum_{\{i,j\}\in\E}\!\!w_{ij}\int_0^{d_g^2(x_i,x_j)}f(s)\,\diff s,\\
f(s)&:=\begin{cases} 1, &  s\in [0,(R-\varepsilon)^2),\\
h(s), & s\in [(R-\varepsilon)^2,R^2),\\
0, &  s\in[R^2,\infty),
\end{cases}\label{eq:f}
\end{align}
where $\varepsilon>0$ is a small constant and $h(s)$ is a smooth function that interpolates $f((R-\varepsilon)^2)=1$ and $f(R^2)=0$ in a manner such that $V$ is $C^2$ on $\M^N$. Define:
\begin{myalgorithm}[Geodesic consensus]\label{algo:intrinsic}
The closed-loop system for $\dot{x}_i=u_i$ under the feedback $u_i:=-\nabla_i V_i$ is given by
\begin{align}\label{eq:dxidt}
\dot{x}_i=\sum_{j\in\Ni}w_{ij}f(d_g(x_i,x_j)^2)\log_{x_i}(x_j),\,\forall\,i\in\V.
\end{align}
\end{myalgorithm}

Algorithm \ref{algo:intrinsic} is a \emph{bounded confidence} opinion consensus model  \cite{proskurnikov2018tutorial}, \ie agent $i$ and $j$ cannot influence each other when $d_g(x_i,x_j)\geq R$. Note that Algorithm \ref{algo:intrinsic} simplifies as
\begin{align}\label{eq:dxidt2}
\dot{x}_i=\sum_{j\in\Ni}w_{ij}\log_{x_i}(x_j)
\end{align}
for $d_g(x_i,x_j)\in[0,R-\varepsilon)$.  In Section \ref{sec:main} we show that there exists a forward invariant set $\mathcal{S}$ such that if $x(0)\in\mathcal{S}$, then the dynamics \eqref{eq:dxidt} are of the form \eqref{eq:dxidt2} and remain on that form at all future times. Because of this result, in the rest of this note, our focus with respect to the geodesic consenus algorithm is on the dynamics \eqref{eq:dxidt2} and the potential
\begin{align}\label{eq:V2}
V(x)|_{\mathcal{S}}=\tfrac12\!\!\sum_{\{i,j\}\in\E}\!\!w_{ij}d_g^2(x_i,x_j),
\end{align}
which we refer to as the \emph{simplified forms}.



\subsection{Chordal consensus}

\label{sec:extrinsic_intro}

\noindent Let the manifold $(\M,g)$ be embedded in an ambient Euclidean space  $\R^{n\times m}$, where $g$ is the induced metric. Denote $\ma[i]{X}:=\iota(x_i)$, where $\iota:\M\hookrightarrow\R^{n\times m}$ is the inclusion map. Denote  $\ma{X}:=(\ve[i]{X})_{i=1}^N\in\smash{(\R^{n\times m})^N}$. Given $\ma{X}\in\M$, any matrix $\ma{M}\in\R^{n\times m}$ can be uniquely decomposed as $\ma{M}=\Pi_{\ma{X}}\ma{M}+\Pi^\perp_{\ma{X}}\ma{M}$, where $\Pi_{\ma{X}}$ and $\Pi_{\ma{X}}^\perp$ are orthogonal projections on the tangent space $\ts[\M]{\ma{X}}$ and normal space  $(\ts[\M]{\ma{X}})^\perp$, respectively.

Define $U:\mathcal{U}\rightarrow\R$, where $\mathcal{U}$ is an open neighborhood of $\M^N$ in $\R^{n\times m}$, based on the Euclidean or \emph{chordal distance}
\begin{align}\label{eq:U}
U(\ma{X}):=\tfrac12\sum_{\{i,j\}\in\E}w_{ij}\|\ve[i]{X}-\ve[j]{X}\|^2.
\end{align}
Let $U_i(\ma[i]{X},\ma[j_1(i)]{X},\ldots,\ma[j_{|\mathcal{N}_i|}(i)]{X}):=\tfrac12\sum_{j\in \Ni}w_{ij}\|\ve[i]{X}-\ve[j]{X}\|^2$, where $\{\ma[j_1(i)]{X},\ldots,\ma[j_{|\mathcal{N}_i|}(i)]{X}\}:=\mathcal{N}_i$. The gradient is
\begin{align*}
\nabla_i U=\Pi_{\ma[i]{X}}\tfrac{\partial}{\partial \ma[i]{X}}U_i,
\end{align*}
where $\nabla_i$ denotes the gradient on $\M$ with respect to $\ma[i]{X}$  and $\smash{\tfrac{\partial}{\partial \ma[i]{X}}}$ is the extrinsic gradient in the ambient Euclidean space with respect to $\ma[i]{X}$ \cite{absil2009optimization}. 

The gradient descent flow on $U$ is given by:
\begin{myalgorithm}[Chordal consensus]\label{algo:extrinsic}
The closed-loop system for $\md[i]{X}:=\ma[i]{U}$ under the feedback $\ma[i]{U}:=-\nabla_i U_i$ is given by
\begin{align}\label{eq:dXidt}
\md[i]{X}=\Pi_{\ma[i]{X}}\sum_{j\in\Ni}w_{ij}(\ma[j]{X}-\ma[i]{X}),\,\forall\,i\in\V.
\end{align}
\end{myalgorithm}

Note that if all $\ma{X}\in\M$ have constant norm, $\|\ma {X}\|=k$, then $\Pi_{\ma{X}}\ma{X}=\ma{0}$, whereby the dynamics \eqref{eq:dXidt} simplify to
\begin{align}
\md[i]{X}=\Pi_{\ma[i]{X}}\sum_{j\in\Ni}w_{ij}\ma[j]{X}.\label{eq:kuramoto}
\end{align}
The assumption of $\|\ma[i]{X}\|=k$ implies that the manifold can be embedded in $\R^{n\times m}$ as a subset of a sphere $\mathcal{S}^{nm-1}$ with radius $k$. This is trivially true of the $n$-sphere. Some important such spaces are the Stiefel and Grassmannian manifolds. Examples of systems of the form \eqref{eq:kuramoto} includes: the Kuramoto model on $\mathcal{S}^1$ in Cartesian coordinates (see Example \ref{ex:Kuramoto}), the Lohe model on the $n$-sphere \cite{lohe2010quantum,ha2018relaxation,markdahl2018tac}, the quantum sync model on $\SO$ \cite{deville2018synchronization}, the Kuramoto model on the Stiefel manifold $\mathsf{St}(p,n)$ \cite{markdahl2018tac}, and the Lohe model on $\mathsf{U}(n)$ (see Example \ref{exa:lohe}). 

\begin{example}\label{exa:lohe}
Consider the Lohe model of synchronization on $\mathsf{U}(n)\subset\C^{n\times n}$ \cite{lohe2010quantum},
\begin{align}\label{eq:Lohe}
\md[i]{U}=\ma[i]{H}\ma[i]{U}+\tfrac12\sum_{j\in\mathcal{N}_i}w_{ij}(\ma[j]{U}-\ma[i]{U}\ma[j]{U}^*\ma[i]{U}),
\end{align}
where $\ma[i]{U}\in\mathsf{U}(n)$, $\ma[i]{H}$ is a Hermitian matrix,  $w_{ij}=w_{ji}\in\R$, and $\cdot^*$ denotes complex conjugation. The case of $\ma[i]{H}=\ma{H}$ can be reduced to $\ma[i]{H}=\ma{0}$ after a linear change of coordinates.

Embed $\mathsf{U}(n)$ in $\mathsf{SO}(2n)$ using the \emph{realification map}
\begin{align*}
\varphi:\C^{n\times n}\rightarrow\R^{2n\times2n}:\ma{Z}=\ma{X}+i\ma{Y}\mapsto\begin{bmatrix} \ma{X} &-\ma{Y}\\
\ma{Y} & \ma{X}
\end{bmatrix},
\end{align*}
which is a linear homomorphism, \ie
\begin{align*}
\varphi\,(\alpha\ma[1]{Z}+\beta\ma[2]{Z})&=\alpha\varphi\,(\ma[1]{Z})+\beta\varphi\,(\ma[2]{Z}),\\ 
\varphi\,(\ma[1]{Z}\ma[2]{Z})&=\varphi\,(\ma[1]{Z})\varphi\,(\ma[2]{Z}),
\end{align*}
for any $\alpha,\beta\in\R$ and $\ma[1]{Z},\ma[2]{Z}\in\C^{n\times n}$  \cite{fomenko1995symplectic}. Let $\ma[i]{R}:=\varphi\,\ma[i]{U}$. Note that $\varphi\,\mathsf{U}(n)\subset\mathsf{SO}(2n)$ since $\ma[2n]{I}=\varphi\,\ma{U}^*\ma{U}=(\varphi\,\ma{U})\mtr\varphi\,\ma{U}$ for any $\ma{U}\in\mathsf{U}(n)$, whereby $\det\varphi\,\ma{U}\in\{-1,1\}$ and $\det\varphi\,\ma{U}=\det\varphi\,\ma[n]{I}=1$ by continuity of $\varphi$ and $\det$.

Consider the dynamics of $\ma[i]{R}$ in the case of $\ma[i]{H}=0$,
\begin{align}
\md[i]{R}&=\varphi\,\md[i]{U}=\tfrac12\sum_{j\in\Ni}w_{ij}(\varphi\,\ma[j]{U}-\varphi\,(\ma[i]{U}\ma[j]{U}^*\ma[i]{U}))\nonumber\\
&=\tfrac12\sum_{j\in\Ni}w_{ij}(\ma[j]{R}-\ma[i]{R}\mat[j]{R}\ma[i]{R})=\Gamma_{\ma[i]{R}}\sum_{j\in\Ni}w_{ij}\ma[j]{R}\nonumber\\%
&=\Pi_{\ma[i]{R}}\sum_{j\in\Ni}w_{ij}\ma[j]{R}\label{eq:LoheSO2n}
\end{align}
where $\Gamma_{\ma[i]{R}}:\R^{2n\times 2n}\rightarrow\ts[\mathsf{SO}(2n)]{i}:\ma{M}\mapsto\ma[i]{R}\skews(\mat[i]{R}\ma{M})$ and $\Pi_{\ma[i]{R}}:\R^{2n\times 2n}\rightarrow\ts[\varphi\, \mathsf{U}(n)]{i}$. The last equality can be established based on the fact that $\varphi\,\mathsf{U}(n)=\mathsf{SO}(2n)\cap\varphi\,\mathsf{GL}(n,\C)$ \cite{fomenko1995symplectic}. Note that \eqref{eq:LoheSO2n} is the gradient descent flow \eqref{eq:kuramoto} of $V((\ma[i]{R})_{i=1}^N):=\tfrac12\sum_{j\in\Ni}w_{ij}\|\ma[j]{R}-\ma[i]{R}\|^2$ on $\varphi\,\mathsf{U}(n)$.  
\end{example}



\section{Main results}
\label{sec:main}

\noindent This section consists of two parts, \ref{sec:intrinsic} and \ref{sec:extrinsic}. First, we define splay states of the geodesic consensus system over cycle graphs and show that they are asymptotically stable. Second, we show that the geodesic and chordal systems are multistable over a larger family of graphs if $\M$ is  multiply connected. 


\subsection{Splay states}

\label{sec:intrinsic}

\noindent Splay or twisted states are two terms used to refer to one set of equilibria of the Kuramoto model \cite{wiley2006size,dorfler2014synchronization}. In this note we use splay states when referring to configurations of the geodesic consensus system and twisted states when referring to configurations of the chordal consensus algorithm.
\begin{definition}[Splay state]\label{eq:splay}
Consider the system \eqref{eq:dxidt} on the cycle graph \eqref{eq:HN}. Assume there is a unique geodesic $\gamma_i$ from $x_i$ to $x_{i+1}$.  Assume $\gamma:=\bigconc_{i=1}^N\gamma_i$ is a closed geodesic. 
The system is said to be in a \emph{splay state} if 
\begin{align*}
d_g(x_i,x_{i+1}):=l(\gamma_i)=\frac{w_{i,i+1}^{-1}}{\sum_{j=1}^N w_{j,j+1}^{-1}}l(\gamma)<R-\varepsilon,\,\forall\,i\in\V,
\end{align*}
for some $\varepsilon>0$.
\end{definition}

Note that if $w_{i,i+1}$ does not depend on $i$, then any splay state satisfies $d_g(x_i,x_{i+1})=l(\gamma)/N$. The interpretation is that the positions of the agents partitions the closed geodesic $\gamma$ into $N$ consecutive segments of equal length. 

\begin{example}\label{ex:Kuramoto}
The Lohe model on $\mathcal{S}^n$ with $\mathcal{H}_N$, $w_{ij}=1$ is
\begin{align*}
\vd[i]{x}&=\ma[i]{\Omega}\ve[i]{x}+\Pi_{\ma[i]{x}}\!\!\sum_{j\in\Ni}\!\ve[j]{x}=\ma[i]{\Omega}\ve[i]{x}+(\ma{I}-\ve[i]{x}\vet[i]{x})(\ve[i+1]{x}+\ve[i-1]{x}),
\end{align*}	
where $\ve[i]{x}\in\mathcal{S}^n\subset\R^{n+1}$, $\ma[i]{\Omega}\in\mathsf{so}(n+1)$ \cite{lohe2010quantum}. The Kuramoto model in polar coordinates is obtained for $n=1$ with  $\ve[i]{x}=[\cos\vartheta_i\,\sin\vartheta_i]\mtr$, $\omega_i=\ma[i,12]{\Omega}$, as
\begin{align}
\dot{\vartheta}_i&=\omega_i+\sin(\vartheta_{i+1}-\vartheta_i)+\sin(\vartheta_{i-1}-\vartheta_i).\label{eq:theta}
\end{align}	
If $\ma[i]{\Omega}=\ma{\Omega}$ (or $\omega_i=\omega$), then the change of variables $\ve[i]{y}=\exp(-\ma{\Omega}t)\ve[i]{x}$ (or  $\theta_i=\vartheta_i-\omega t$) transforms the system to  \eqref{eq:kuramoto}.
The sets of $q$-twisted states where $k\in\N$, are given by
\begin{align*}
\mathcal{T}_{\ve{y}}&=\{(\ve[i]{y})_{i=1}^N\in(\R^{n+1})^N\,|\,\ve[i+1]{y}=\ma{R}\ve[i]{y}\},\\
\mathcal{T}_{\theta}&=
\{(\theta)_{i=1}^N\in\R^N\,|\,\theta_i=2iq\pi/N+\varphi \,,\,\varphi\in\R\},
\end{align*}
where $\ma{R}\in\mathsf{SO}(n+1)$ is a rotation by $2q\pi/N$ radians.
\end{example}

Twisted states are equilibria of the chordal consensus system on $\mathcal{S}^n$. Twisted states are unstable for $n\geq2$ \cite{markdahl2018tac}, but asymptotically stable on $\mathcal{S}^1$ \cite{wiley2006size}. Those results are consistent with this note since $\mathcal{S}^n$ is simply connected for $n\geq2$, but $\mathcal{S}^1$ is multiply connected. Twisted states and other related configurations are of relevance for applications in neuroscience, deep-brain stimulation, and vehicle coordination \cite{dorfler2014synchronization}.

Note that definition \eqref{eq:splay}, which concerns the geodesic consensus system, does not use a parameter like $q$. If a closed curve $\gamma$ wraps around the area in its interior $q$ times, then the total length $l(\gamma)$ is $q$ times the length of a single loop.

\begin{proposition}\label{prop:equilibria}
Splay states as given by Definition \ref{eq:splay} are equilibria of the geodesic consensus system \eqref{eq:dxidt2} over the cycle graph $\mathcal{H}_N$ defined by \eqref{eq:HN}.
\end{proposition}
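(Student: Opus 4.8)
The plan is to evaluate the right-hand side of the simplified dynamics \eqref{eq:dxidt2} at a splay state and show that it vanishes coordinatewise. First I would use the structure of the cycle graph: in $\mathcal{H}_N$ each vertex $i$ has exactly the two neighbors $i-1$ and $i+1$, so \eqref{eq:dxidt2} reads $\dot x_i = w_{i,i-1}\log_{x_i}(x_{i-1}) + w_{i,i+1}\log_{x_i}(x_{i+1})$. The splay condition of Definition \ref{eq:splay} gives $l(\gamma_i) = d_g(x_i,x_{i+1}) < R-\varepsilon$ for every $i\in\V$, so both $x_{i-1}$ and $x_{i+1}$ lie in the geodesic ball $\mathcal{B}_R(x_i)$ on which $\exp_{x_i}$ is a diffeomorphism. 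Hence the two logarithms are well defined, the segments $\gamma_{i-1},\gamma_i$ are the unique geodesics joining their endpoints, and the dynamics is genuinely of the simplified form so that it suffices to prove $\dot x_i = 0$.

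The geometric heart of the argument is that, by hypothesis, $\gamma = \cup_{i=1}^N \gamma_i$ is a \emph{closed geodesic}, hence a smooth closed curve with no corner at the interpolation points. Parametrizing $\gamma$ by arc length in the direction $x_{i-1}\to x_i\to x_{i+1}$, let $u_i\in\ts[\M]{x_i}$ be the corresponding unit tangent at $x_i$. Smoothness of $\gamma$ at $x_i$ forces the terminal velocity of $\gamma_{i-1}$ (traversed from $x_{i-1}$ to $x_i$) to coincide with the initial velocity of $\gamma_i$ (traversed from $x_i$ to $x_{i+1}$), both equal to $u_i$. Since within the injectivity radius $\log_{x_i}(y)$ is the initial velocity of the unique geodesic from $x_i$ to $y$ scaled to length $d_g(x_i,y)$, this yields $\log_{x_i}(x_{i+1}) = l(\gamma_i)\,u_i$ and $\log_{x_i}(x_{i-1}) = -l(\gamma_{i-1})\,u_i$; the two logarithm vectors at $x_i$ are anti-parallel.

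Substituting gives $\dot x_i = \bigl(w_{i,i+1}\,l(\gamma_i) - w_{i-1,i}\,l(\gamma_{i-1})\bigr)u_i$, and it remains to see the scalar factor vanishes. This is exactly where the precise partition in Definition \ref{eq:splay} is used: $w_{i,i+1}\,l(\gamma_i) = w_{i,i+1}\cdot\frac{w_{i,i+1}^{-1}}{\sum_{j=1}^N w_{j,j+1}^{-1}}\,l(\gamma) = \frac{l(\gamma)}{\sum_{j=1}^N w_{j,j+1}^{-1}}$, and the identical computation applied to the edge $\{i-1,i\}$ (using $w_{i,i-1}=w_{i-1,i}$) gives $w_{i-1,i}\,l(\gamma_{i-1}) = \frac{l(\gamma)}{\sum_{j=1}^N w_{j,j+1}^{-1}}$; the two are equal, so $\dot x_i = 0$ for every $i\in\V$. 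I expect the only delicate point — and the one I would single out as the crux — to be the anti-parallelism of the logarithm vectors: everything rests on the fact that a closed geodesic carries a continuous unit tangent field through the points $x_i$, so that the forward and backward minimizing geodesics emanating from $x_i$ are exact reverses of one another; once that is in place, the rest is the bookkeeping above.
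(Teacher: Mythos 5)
Your proposal is correct and follows essentially the same route as the paper's proof: both use that the closed geodesic forces the two logarithm vectors at $x_i$ to be anti-parallel, that their norms equal the prescribed splay-state distances, and that the weights then cancel. Your write-up is in fact somewhat more explicit about why the anti-parallelism holds (the continuous unit tangent field of the closed geodesic), which the paper only asserts.
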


\begin{proof}
The dynamics of agent $i$ are
\begin{align}\label{eq:equilibrium}
\dot{x}_i=w_{i-1,i}\log_{x_i}(x_{i-1})+w_{i,i+1}\log_{x_i}(x_{i+1}).
\end{align}
Note that 
\begin{align*}
g_{x_i}(v,v)^\frac12|_{v=\log_{x_{i}}(x_{i+1})}&=d_g(x_{i},x_{i+1})=\frac{w_{i,i+1}^{-1}}{\sum_{j=1}^N w_{j,j+1}^{-1}}l(\gamma).
\end{align*}
Because $\gamma$ is a closed geodesic, the tangent vectors $\log_{x_{i}}(x_{i-1})$ and $\log_{x_i}(x_{i+1})$ are negatively aligned. There hence exists a $v\in\ts[\M]{x_i}$ such that
\begin{align}
\dot{x}_i
&=w_{i-1,i}\frac{w_{i-1,i}^{-1}}{\sum_{j=1}^N w_{j,j+1}^{-1}}l(\gamma)v-
w_{i,i+1}\frac{w_{i,i+1}^{-1}}{\sum_{j=1}^N w_{j,j+1}^{-1}}l(\gamma)v\nonumber\\
&=0.\tag*{\qedhere}
\end{align}
\end{proof}

Proposition \ref{prop:equilibria} shows that splay states are critical points of the disagreement function $V$. If the closed geodesic $\gamma$ is a local minimizer of the curve length function in the space $\Lambda(\M)/\!\sim$ given by \eqref{eq:quotient}, then we can say more: the set of splay states over $\gamma$ are local minimizers of $V$. Consider two types of local variations of $(x_i)_{i=1}^N$ around $\gamma$: (i) variations along $\gamma$ and (ii) variations that take the agents off $\gamma$. In case (i), minimality of $(x_i)_{i=1}^N$ is given by the equidistant partition of $\gamma$ when the weights $w_{i,i+1}$ are constant. This follows from the fact that for a given mean $\bar{t}:=\sum_{i=1}^nt_i/n$, the value of $\sum_{i=1}^nt_i^2$ is the least when all $t_i$ are equal, which is a special instance of a much more general symmetry principle in optimization \cite{waterhouse1983symmetric}. In case (ii), minimality of $(x_i)_{i=1}^N$ is given by the fact that $\gamma$ is a local minimizer of the curve length function. The proof of our main result, Theorem \ref{prop:main}, shows that we can combine these observations to cover all local variations of $(x_i)_{i=1}^N$.

\begin{theorem}\label{prop:main}
Let $(\M,g)$ be a geodesically complete Riemannian manifold. Suppose $\M$ contains a closed geodesic $\gamma$ of locally minimum length $L:=l(\gamma)$ in the space $\Lambda(\M)/{\sim}$ given by \eqref{eq:quotient}. Let $\gamma_i$ denote a geodesic from $x_i$ to $x_{i+1}$ and let
\begin{align*}
\mathcal{S}_\gamma:=\Bigl\{x\in\M^N\,\big|&{}\bigconc_{i=1}^N \gamma_i=\gamma,\\
&\,d_g(x_i,x_{i+1})<R-\varepsilon,\\
&\,d_g(x_i,x_{i+1})=\frac{w_{i,i+1}^{-1}}{\sum_{j=1}^N w_{j,j+1}^{-1}}L,
\,\forall\,i\in\V\Bigr\}
\end{align*}
denote a set of splay states on $\gamma$. Any element of $\mathcal{S}_\gamma$ is a local minimizer of $V$ given by \eqref{eq:V} with $\G=\mathcal{H}_N$. If $\gamma$ is a strict local minimizer of $l$, then $\mathcal{S}_\gamma$ is a strict local minimizer of $V$. 
\end{theorem}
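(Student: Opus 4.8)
\noindent\emph{Proof strategy.} The plan is to work with the simplified form of $V$ near a splay state and to reduce the claim to two elementary inequalities: a weighted Cauchy--Schwarz inequality governing how the agents partition the curve (case (i) in the discussion before the theorem), and the local minimality of $[\gamma]$ in $\Lambda(\M)$ governing deformations of the curve itself (case (ii)). First I would fix $x^\star=(x_i^\star)_{i=1}^N\in\mathcal{S}_\gamma$. Since $d_g(x_i^\star,x_{i+1}^\star)<R-\varepsilon$ for every $i$, by continuity there is a neighborhood $\mathcal{O}\subset\M^N$ of $x^\star$ such that every $x=(x_i)_{i=1}^N\in\mathcal{O}$ still satisfies $d_g(x_i,x_{i+1})<R-\varepsilon$ for all $i$. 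On $\mathcal{O}$ the smoothing function \eqref{eq:f} obeys $f\equiv1$ on the relevant range, so $V$ coincides with its simplified form \eqref{eq:V2}, $V(x)=\tfrac12\sum_{i\in\V}w_{i,i+1}\,\ell_i^2$ with $\ell_i:=d_g(x_i,x_{i+1})$; moreover the unique geodesic $\beta_i$ from $x_i$ to $x_{i+1}$ exists, and, by smooth dependence of short geodesics on their endpoints, the closed broken geodesic $\beta=\cup_{i=1}^N\beta_i$ can be made to lie in any prescribed neighborhood of $[\gamma]$ in $\Lambda(\M)$ by shrinking $\mathcal{O}$.

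Next I would prove the two inequalities. Applying Cauchy--Schwarz with $a_i=\sqrt{w_{i,i+1}}\,\ell_i$ and $b_i=1/\sqrt{w_{i,i+1}}$ gives
\begin{align*}
l(\beta)^2=\Bigl(\sum_{i\in\V}\ell_i\Bigr)^{\!2}\le\Bigl(\sum_{i\in\V}w_{i,i+1}\ell_i^2\Bigr)\Bigl(\sum_{i\in\V}w_{i,i+1}^{-1}\Bigr),
\end{align*}
hence $V(x)\ge l(\beta)^2\big/\bigl(2\sum_{i\in\V}w_{i,i+1}^{-1}\bigr)$, with equality exactly when $w_{i,i+1}\ell_i$ is independent of $i$, i.e.\ $\ell_i=(w_{i,i+1}^{-1}/\sum_{j}w_{j,j+1}^{-1})\,l(\beta)$ for all $i$. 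Because $[\gamma]$ is a local minimizer of $l$ over $\Lambda(\M)$ and $[\beta]$ lies in the relevant neighborhood, $l(\beta)\ge L$. A direct computation gives $V(x^\star)=\tfrac12\sum_{i\in\V}w_{i,i+1}\bigl(w_{i,i+1}^{-1}L/\sum_j w_{j,j+1}^{-1}\bigr)^2=L^2\big/\bigl(2\sum_{i\in\V}w_{i,i+1}^{-1}\bigr)$, which is the same constant for every element of $\mathcal{S}_\gamma$. Chaining the two inequalities yields $V(x)\ge L^2\big/\bigl(2\sum_i w_{i,i+1}^{-1}\bigr)=V|_{\mathcal{S}_\gamma}$ for all $x\in\mathcal{O}$, so $\mathcal{S}_\gamma$ is a local minimizer of $V$ with $\G=\mathcal{H}_N$ in the sense of Definition \ref{def:minimizer}.

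For the strict statement, assume $[\gamma]$ is a strict local minimizer of $l$ and let $x\in\mathcal{O}$ satisfy $V(x)=V|_{\mathcal{S}_\gamma}$; then both inequalities above are equalities. Equality $l(\beta)=L$ together with strict local minimality of $[\gamma]$ and $[\beta]$ being close to $[\gamma]$ forces $[\beta]=[\gamma]$, so the points $x_i$ all lie on $\gamma$; equality in Cauchy--Schwarz forces $\ell_i=(w_{i,i+1}^{-1}/\sum_j w_{j,j+1}^{-1})L$, so the $x_i$ partition $\gamma$ into the prescribed segment lengths. Hence $x\in\mathcal{S}_\gamma$, and therefore $V(x)>V|_{\mathcal{S}_\gamma}$ for every $x\in\mathcal{O}\setminus\mathcal{S}_\gamma$, i.e.\ $\mathcal{S}_\gamma$ is a strict local minimizer of $V$. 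I expect the main obstacle to be the topological bookkeeping in passing from ``$x$ near $x^\star$ in $\M^N$'' to ``$[\beta]$ near $[\gamma]$ in $\Lambda(\M)$'': one must pin down the topology on the loop space used in Proposition \ref{th:simply}, verify $C^0$-convergence of $\beta$ to $\gamma$ from uniqueness and continuity of geodesics inside the injectivity radius, and ensure that ``strict local minimizer of $l$'' can then be invoked to exclude $[\beta]\ne[\gamma]$ for $x$ sufficiently close to $x^\star$.
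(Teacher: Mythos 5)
Your proof is correct and follows the same overall decomposition as the paper's: localize around a splay state so that $V$ takes the simplified form \eqref{eq:V2} and the interpolating closed broken geodesic stays in a neighborhood of $[\gamma]$ in $\Lambda(\M)$; lower-bound $V$ in terms of the total length of that broken geodesic by asking how a curve of fixed length is optimally partitioned; and then invoke local minimality of $[\gamma]$ to bound that length below by $L$. The one genuine difference is in the partitioning step: the paper sets up the quadratic program \eqref{eq:QP}, argues the positivity constraints can be relaxed, and solves the resulting equality-constrained program \eqref{eq:QP2} by inverting the KKT system to get the optimal value $l^2(c(y))/\sum_i w_{i,i+1}^{-1}$, whereas you obtain the same lower bound in one line from the weighted Cauchy--Schwarz inequality. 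Your route is more elementary and has a concrete payoff for the strictness claim: Cauchy--Schwarz comes with an explicit equality condition (namely $w_{i,i+1}\,d_g(y_i,y_{i+1})$ independent of $i$), which, combined with the fact that $l(\beta)=L$ forces $[\beta]=[\gamma]$ under strict minimality, pins down exactly when $V(x)=V|_{\mathcal{S}_\gamma}$ and shows this happens only for $x\in\mathcal{S}_\gamma$ --- a point the paper's proof leaves largely implicit, since it only remarks that the inequality \eqref{eq:gap} becomes strict, which covers perturbations off $\gamma$ but not the redistribution of the points along $\gamma$ itself. You are also right that the remaining care is topological: one must check that $y\mapsto c(y)$ is continuous into $\Lambda(\M)$ so that $c(y)$ lands in the neighborhood $\mathcal{D}(\gamma)$ on which $l\geq L$; the paper handles this with the same appeal to continuity of the exponential map inside the injectivity radius that you sketch.
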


\begin{proof} Because the closed geodesic $\gamma$ is a local minimizer of the curve length $l$ in the space $\Lambda(\M)/{\sim}$, there is an open ball $\mathcal{D}(\gamma)\subset\Lambda(\M)/{\sim}$ such that for all $\lambda\in\mathcal{D}(\gamma)$ it holds that $l(\lambda)\geq L$. Let $x=(x_i)_{i=1}^N\in\mathcal{S}_\gamma$. By continuity of the exponential map, due to $d_g(x_i,x_{i+1})<R-\varepsilon\leq r(x_i)$ for all $i\in\V$, all closed broken geodesics interpolating points in a neighborhood of $x$ are continuous. There is  a neighborhood $\mathcal{B}(x)\subset\M^N$ such that for all $y=(y_i)_{i=1}^N\in\mathcal{B}(x)$ it holds that the closed broken geodesic $\lambda(y)=\bigconc_{i=1}^N\lambda_i\in\mathcal{D}(\gamma)$, where $\lambda_i$ is a geodesic from $y_i$ to $y_{i+1}$, and $d_g(y_i,y_{i+1})<R-\varepsilon$. Hence $l(\lambda(y))\geq L$ and $V$ given by \eqref{eq:V} simplifies to the form \eqref{eq:V2} for all $y\in\mathcal{B}(x)$.
	
It suffices to prove local optimality of $x$ in $\mathcal{B}(x)$. The problem we wish to solve is
\begin{align}
\label{eq:intrinsic}
\min_{y\in\mathcal{B}(x)} V(y)&=\tfrac12\sum_{i=1}^N w_{i,i+1}d_g^2(y_i,y_{i+1}).
\end{align}
The constraint 
\begin{align}
	l(\lambda(y))=\sum_{i=1}^N d_g(y_i,y_{i+1})\geq L\label{eq:L}
\end{align}
holds on $\mathcal{B}(x)$. Since this constaint is redundant on $\mathcal{B}(x)$ we can add it to \eqref{eq:intrinsic} without any loss of optimality, forming
\begin{align}\tag{P}
	\label{eq:constrained}\begin{split}
		\min_{y\in\mathcal{B}(x)} V&=\tfrac12\sum_{i=1}^N w_{i,i+1}d_g^2(y_i,y_{i+1})\\
		\textrm{subject to }L&\leq\sum_{i=1}^N d_g(y_i,y_{i+1})\nonumber.
	\end{split}
\end{align}

%
%

Let a fixed $y\in\mathcal{B}(x)$ be a feasible solution to \eqref{eq:constrained}. Introduce a quadratic program,
\begin{align}
	\tag{QP${}_y$}
	\label{eq:QP}\begin{split}
		\min_{\ve{d}\in[0,\infty)^N} f&=\tfrac12\sum_{i=1}^N w_{i,i+1}d_{i,i+1}^2\\
		\textrm{subject to }l(\lambda(y))&=\sum_{i=1}^N d_{i,i+1}.
	\end{split}
\end{align}
Note that a feasible solution to \eqref{eq:QP} is given by the vector
\begin{align*}
\ve{d}(y)=[d_{i,i+1}(y)]:=[d_g(y_i,y_{i+1})]\in[0,\infty)^N
\end{align*}
and that this solution has the same objective value on \eqref{eq:QP} as the solution $y$ has on \eqref{eq:constrained}. 



We will show that $\ve{d}(y)$ is, in general, a suboptimal solution to \eqref{eq:QP}. Let $g:\mathcal{B}(x)\rightarrow\R$ denote the value of the optimal solution to \eqref{eq:QP} as a function of $y$. We will show that
\begin{align}
	V(y)=f(\ve{d}(y))\geq g(y)\geq g(z)|_{z\subset\gamma}=V|_{\mathcal{S}},\label{eq:train}
\end{align}
which implies optimality of $\mathcal{S}$. So far we have only shown the first relation. The second relation is true by the definition of $g$. It remains to establish the last two relations.

The positivity constraint $\ve{d}\in[0,\infty)^N$ in \eqref{eq:QP} can be relaxed. To see this, note that if $d_{j,j+1}<0$ for some $j\in\V$, then $d_{j,j+1}$ does not help to satisfy the constraint $l(\lambda(y))=\sum_{i=1}^N d_{i,i+1}$ while simultaneously incurring a cost $w_{j,j+1}d_{j,j+1}^2$ to the objective function value. An infeasible solution can be constructed where $d_{j,j+1}$ is replaced with $0$, which reduces the cost. To obtain a feasible solution, continue decreasing the values of other variables until the constraint holds. This results in the objective function value decreasing a second time, thus yielding a superior solution.

By relaxing the positivity constraints we obtain the equality constrained quadratic program
\begin{align}
	\tag{EQP${}_{y}$}
	\label{eq:QP2}\begin{split}
		\min_{\ve{d}\in\R^N} f&=\tfrac12\sum_{i=1}^N w_{i,i+1}d_{i,i+1}^2\\
		\textrm{subject to }l(\lambda(y))&=\sum_{i=1}^N d_{i,i+1}.\nonumber
	\end{split}
\end{align}
The optimal solution to \eqref{eq:QP2} is optimal to \eqref{eq:QP} and vice versa. There is no loss of generality in focusing on \eqref{eq:QP2}.

The optimization problem \eqref{eq:QP2} can be solved using the Karush-Kuhn-Tucker conditions for optimality \cite{nocedal1999numerical},
\begin{align}\label{eq:lagrange}
\begin{bmatrix}
	\ma{W} & \mat{1}\\
	\ma{1} & 0
\end{bmatrix}\begin{bmatrix}
\ve{d}\\
\mu
\end{bmatrix}=\begin{bmatrix}
\ve{0}\\
l(\lambda(y))
\end{bmatrix},
\end{align}
where $\ve{d}\in\R^{N}$ is given by $\ve[i]{d}=d_{i,i+1}$, $\ve{1}=[1\,\ldots\,1]\,\smash{\in\R^{N}}$, $\ma{W}$ with $\ma[ii]{W}=w_{i,i+1}$ is diagonal, and $\mu\in\R$ is a Lagrange multiplier. The solution is optimal by convexity of \eqref{eq:QP2}.


Denote 
\begin{align*}
\ma{A}:=\begin{bmatrix}
	\ma{W} & \vet{1}\\
	\ve{1} & 0
\end{bmatrix},\quad
\ma{M}:=\ma{W}\inv.
\end{align*}
It can easily be verified that
\begin{align*}
\ma{A}\inv=\frac{1}{\vphantom{1^1}\ve{1}\ma{M}\vet{1}}\begin{bmatrix}
	(\ve{1}\ma{M}\vet{1})\ma{M}-\ma{M}\vet{1}\ve{1}\ma{M} & \ma{M}\vet{1}\\\
	\ve{1}\ma{M} & -1
\end{bmatrix},
\end{align*}
and solving \eqref{eq:lagrange} yields  $\ve{d}=(l(\lambda(y))/\ve{1}\ma{M}\vet{1})\ma{M}\vet{1}$.
%
%
The value of the optimal solution to \eqref{eq:QP2} is
\begin{align}
\sum_{i=1}^N w_{i,i+1}d_{i,i+1}^2&=\vet{d}\ma{W}\ve{d}
=\frac{l^2(\lambda(y))}{\sum_{i=1}^N w_{i,i+1}^{-1}}.\label{eq:VLP}
\end{align}

Recall that $\gamma$ is a local minimizer of $l$ and $g(y)$ denotes the optimal value to \eqref{eq:QP2} and \eqref{eq:QP}. From \eqref{eq:VLP} it follows that 
\begin{align}\label{eq:gap}
g(y)=\frac{l^2(\lambda(y))}{\sum_{i=1}^N w_{i,i+1}^{-1}}\geq\frac{l^2(\gamma)}{\sum_{i=1}^N w_{i,i+1}^{-1}}=g(x)|_{x\in\gamma},
\end{align}
which is the third relation in \eqref{eq:train}. The last relation in \eqref{eq:train}, $g(x)|_{x\in\gamma}=V|_{\mathcal{S}}$, follows due to $\gamma$ being a closed geodesic whereby any set of points $\smash{x=(x_i)_{i=1}^N\subset\gamma}$ regenerates $\gamma=\lambda(x)$ as their closed broken geodesic. This property allows us to construct a solution $x(\ve{d})$ to \eqref{eq:constrained} from the solution $\ve{d}$ to \eqref{eq:QP2}. The optimal solution to the problem \eqref{eq:QP2} tells us how to position the agent $(x_i)_{i=1}^N$ on $\gamma$. The set of such points is $\mathcal{S}_\gamma$ by inspection of $\ve{d}$. Assume that $\gamma$ is a strict minimizer of $l$ in $\Lambda(\M)/{\sim}$, then the inequality in \eqref{eq:gap} is strict. 
\end{proof}



\begin{corollary}
Let $\M$ be closed and suppose Theorem \ref{prop:main} applies. If $\gamma$ is a strict local minimizer of $l$ in $\Lambda(\M)/{\sim}$, then $\mathcal{S}_\gamma$ is a Lyapunov stable equilibrium set of  \eqref{eq:dxidt}. If $\gamma$ is also an isolated critical point, then $\mathcal{S}_\gamma$ is asympotically stable.
\end{corollary}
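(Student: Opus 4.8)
The plan is to read the corollary off the Lyapunov theorem, Proposition~\ref{prop:not}, applied to the $C^2$ potential $V$ of \eqref{eq:V} and its gradient descent flow \eqref{eq:dxidt}. Four ingredients are needed. (1)~Every $x\in\mathcal{S}_\gamma$ has $d_g(x_i,x_{i+1})<R-\varepsilon$, so a whole neighborhood of $\mathcal{S}_\gamma$ in $\M^N$ lies in the region where $f\equiv1$; there \eqref{eq:dxidt} coincides with $\dot x=-\grad V$ in its simplified form \eqref{eq:dxidt2}, and $V$ is $C^2$ by the choice of $f$ in \eqref{eq:f}. (2)~Theorem~\ref{prop:main} says precisely that $\mathcal{S}_\gamma$ is a set of local minimizers of $V$, and a strict local minimizer of $V$ when $[\gamma]$ is a strict local minimizer of $l$. (3)~$\mathcal{S}_\gamma$ is compact. (4)~For the asymptotic-stability part, $\mathcal{S}_\gamma$ is isolated critical for $V$ in the sense of Definition~\ref{def:critical}. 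Since Lyapunov and asymptotic stability are local notions, the replacement of \eqref{eq:dxidt} by its simplified form near $\mathcal{S}_\gamma$ in (1) is harmless.

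Ingredient (3) is immediate: fixing a unit-speed parametrization $\gamma:\R/L\Z\to\M$ and writing $t_i=\tfrac{w_{i,i+1}^{-1}}{\sum_j w_{j,j+1}^{-1}}L$ for the prescribed arc lengths, the map $s\mapsto\bigl(\gamma(s),\gamma(s+t_1),\gamma(s+t_1+t_2),\dots,\gamma\bigl(s+\sum_{i<N}t_i\bigr)\bigr)$ is a continuous surjection of the circle $\R/L\Z$ onto $\mathcal{S}_\gamma$. With (1)--(3) in hand, the first part of Proposition~\ref{prop:not} gives that $\mathcal{S}_\gamma$ is a Lyapunov stable equilibrium set of \eqref{eq:dxidt} whenever $[\gamma]$ is a strict local minimizer of $l$ --- this is the first assertion.

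For the second assertion I would establish ingredient (4) by showing that, if $[\gamma]$ is an isolated critical point of $l$ in $\Lambda(\M)$, then every critical point $x=(x_i)_{i=1}^N$ of $V$ close enough to $\mathcal{S}_\gamma$ lies in $\mathcal{S}_\gamma$. By (1), the equations $\grad_i V(x)=0$ read $w_{i-1,i}\log_{x_i}(x_{i-1})+w_{i,i+1}\log_{x_i}(x_{i+1})=0$ for all $i$. Since the weights are positive and $d_g(x_i,x_{i\pm1})<R-\varepsilon$, this forces $\log_{x_i}(x_{i-1})$ to be a negative multiple of $\log_{x_i}(x_{i+1})$ at every vertex, i.e., the closed broken geodesic $c(x)$ interpolating $x$ has no corner; a cornerless closed piecewise geodesic is a closed geodesic, hence a critical point of $l$. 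By the continuity of the broken-geodesic construction used in the proof of Theorem~\ref{prop:main}, $c(x)$ belongs to the neighborhood of $[\gamma]$ in $\Lambda(\M)$ that is free of other critical points of $l$, so $[c(x)]=[\gamma]$; thus all $x_i$ lie on $\gamma$ and $l(c(x))=L$. Restricting $V$ to the motions of the $x_i$ along $\gamma$, which change only $t_{i-1}:=d_g(x_{i-1},x_i)$ and $t_i:=d_g(x_i,x_{i+1})$ with $\mathrm{d}t_{i-1}=-\mathrm{d}t_i$, the condition $\grad V(x)=0$ becomes the Lagrange condition for minimizing $\tfrac12\sum_i w_{i,i+1}t_i^2$ subject to $\sum_i t_i=L$; its unique solution is the vector $\ve{d}$ solving \eqref{eq:lagrange}, which identifies $x$ with an element of $\mathcal{S}_\gamma$. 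Hence a punctured neighborhood of $\mathcal{S}_\gamma$ is void of critical points of $V$, so $\mathcal{S}_\gamma$ is isolated critical, and the second part of Proposition~\ref{prop:not} yields asymptotic stability.

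The one genuinely delicate step should be upgrading criticality of $V$ to $c(x)$ being a cornerless (hence honest) closed geodesic: this uses the confidence bound $d_g(x_i,x_{i+1})<R-\varepsilon$ essentially, since it rules out the spurious equilibria of $\sin a+\sin b=0$ (such as $a+b=\pi$) that occur for the Kuramoto model \eqref{eq:theta} on the full circle. Everything else is bookkeeping on top of Theorem~\ref{prop:main} and Proposition~\ref{prop:not}.
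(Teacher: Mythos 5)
Your proposal is correct and follows essentially the same route as the paper: Lyapunov stability from Proposition~\ref{prop:not} via Theorem~\ref{prop:main}, and asymptotic stability by showing that any critical point of $V$ near $\mathcal{S}_\gamma$ has a cornerless, hence genuine, closed broken geodesic, which the isolation of $[\gamma]$ then pins down. Your extra Lagrange-condition step, which excludes critical points whose closed geodesic is $[\gamma]$ itself but with non-splay spacing, closes a case that the paper's contradiction argument leaves implicit.
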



\begin{proof}
Lyapunov stability follows from Proposition \ref{prop:not}.

Asymptotical stability also follows from Proposition \ref{prop:not} if $\mathcal{S}_\gamma$ is an isolated critical set of $V$. Assume that $\mathcal{S}_\gamma$ is not isolated. Let $x\in\mathcal{S}_\gamma$ and $\mathcal{B}_k(x)$ be a neighborhood of $x$ as detailed in the proof of Theorem \ref{prop:main}. Let $(\mathcal{B}_{k}(x))_{k=1}^\infty$, $\mathcal{B}_{k+1}(x)\subset\mathcal{B}_{k}(x)$ be a nonincreasing sequence of such neighborhoods of $x$ with $\lim_{k\rightarrow\infty}\mathcal{B}_{k}(x)=\{x\}$.
Since $\mathcal{S}_\gamma$ is not an isolated critical set, each $\mathcal{B}_k(x)\backslash\mathcal{S}_\gamma$ contains at least one critical point $y^k=(y^k_i)_{i=1}^N$ of $V$. Then $y^k$ satisfies
\begin{align}\label{eq:critical}
\dot{y}_i^k=w_{i-1,i}\log_{y_i^k}(y_{i-1}^k)+w_{i,i+1}\log_{y_i^k}(y_{i+1}^k)=0.
\end{align}
Let $\lambda_{i-1}^k$ and $\lambda_i^k$ denote the geodesics which connect $y_{i-1}^k,y_i^k$, and $y_{i+1}^k$. Equation \eqref{eq:critical} implies that the tangent vectors of  $\lambda_{i-1}^k$ and $\lambda_i^k$ are (negatively) aligned at $y_i^k$. As such, the curve $\lambda_{i-1}^k\conc\lambda_{i}^k$ is a geodesic. Moreover, $\lambda:=\bigconc_{i=1}^N\lambda_i^k$ is a closed geodesic by induction. Closed geodesics are critical points of $l$ in $\Lambda(\M)/{\sim}$. A geodesic $\lambda\notin[\gamma]$  being arbitrarily close to $\gamma$ contradicts the assumption that $\gamma$ is isolated.
\end{proof}


%



\subsection{Multistability}

\label{sec:extrinsic}


\noindent Multistability as given by Definition \ref{def:multistable} requires two sets of limits to be disjoint. The first is the consensus manifold:

\begin{proposition}\label{prop:consensus}
Suppose that $\M$ is closed. The consensus manifold $\mathcal{C}$ given by \eqref{eq:C} is a Lyapunov stable equilibrium manifold of Algorithm \ref{algo:intrinsic} and \ref{algo:extrinsic}.
\end{proposition}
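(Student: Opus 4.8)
The plan is to apply the Lyapunov theorem, Proposition~\ref{prop:not}, to the potential $V$ of Algorithm~\ref{algo:intrinsic} and the potential $U$ of Algorithm~\ref{algo:extrinsic} at the set $\mathcal{C}$. This requires three things: that $\mathcal{C}$ is a compact equilibrium set of both flows, that $V$ and $U$ are $C^2$, and, crucially, that $\mathcal{C}$ is a \emph{strict} local minimizer of each potential.

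First I would check the routine items. For $x=(x_i)_{i=1}^N\in\mathcal{C}$ we have $x_i=x_j$ and $\ve[i]{X}=\ve[j]{X}$ for every edge $\{i,j\}\in\E$, so $\log_{x_i}(x_j)=0$ and $\ve[i]{X}-\ve[j]{X}$ vanishes for every $j\in\Ni$; hence the right-hand sides of \eqref{eq:dxidt} and \eqref{eq:dXidt} are zero, and $\mathcal{C}$ is an equilibrium set of both algorithms. Compactness of $\mathcal{C}$ follows from $\M$ being closed together with the diffeomorphism $(x)_{i=1}^N\mapsto x$ between $\mathcal{C}$ and $\M$ recorded after \eqref{eq:C}. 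The potential $U$ is smooth (Section~\ref{sec:extrinsic_intro}), and $V$ is $C^2$ on $\M^N$ by the design of the smoothing function $f$ in \eqref{eq:f}.

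The substance of the proof is that $\mathcal{C}$ minimizes $V$ and $U$ strictly. Both potentials are non-negative --- each summand is a positive weight times either a squared distance or the integral of the non-negative $f$ over an interval $[0,d^2]$ --- and both vanish on $\mathcal{C}$. For $U$ the converse is immediate: $U(x)=0$ forces $\ve[i]{X}=\ve[j]{X}$ on every edge, and since $\G$ is connected all the $\ve[i]{X}$ coincide, i.e. $x\in\mathcal{C}$; thus $\mathcal{C}$ is a strict global minimizer of $U$. For $V$ one first reduces to the simplified form \eqref{eq:V2} near $\mathcal{C}$: if $d_g(y,\mathcal{C})$ is small then, choosing a consensus configuration close to $y$ and using the triangle inequality together with $R>0$, every edge satisfies $d_g(y_i,y_j)<R-\varepsilon$, so $f\equiv 1$ on the relevant arguments and $V(y)=\tfrac12\sum_{\{i,j\}\in\E}w_{ij}d_g^2(y_i,y_j)$ on that neighborhood. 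This is positive unless $d_g(y_i,y_j)=0$ on every edge, which by connectedness of $\G$ again means $y\in\mathcal{C}$. Hence $\mathcal{C}$ is a strict local minimizer of $V$. Proposition~\ref{prop:not} then delivers Lyapunov stability of $\mathcal{C}$ for both Algorithm~\ref{algo:intrinsic} and Algorithm~\ref{algo:extrinsic}.

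The one delicate point is the neighborhood estimate that makes $V$ collapse to \eqref{eq:V2} near $\mathcal{C}$; it rests on $R=\inf_{x\in\M}r(x)>0$ and on the product-metric meaning of $d_g(\cdot,\mathcal{C})$ on $\M^N$. Everything else is a direct consequence of non-negativity of the potentials, their vanishing on $\mathcal{C}$, and connectedness of $\G$. Note that only Lyapunov stability is claimed, not asymptotic stability, so one does not need $\mathcal{C}$ to be an isolated critical set --- which is consistent with the multistability phenomenon established later in the section.
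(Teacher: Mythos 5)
Your proof is correct and follows essentially the same route as the paper: the paper's proof simply observes that $W\geq 0$ with equality exactly on $\mathcal{C}$ (covering both $f$ as in \eqref{eq:f} and $f=1$), that closedness of $\M$ makes $\mathcal{C}$ compact, and then invokes Proposition~\ref{prop:not}. Your extra step reducing $V$ to the simplified form \eqref{eq:V2} near $\mathcal{C}$ is not actually needed, since $f\equiv 1$ on an initial interval already makes each summand $\int_0^{d_g^2(x_i,x_j)}f(s)\,\diff s$ strictly positive whenever $d_g(x_i,x_j)>0$.
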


\begin{proof}
Consider the disagreement function $W$ given by \eqref{eq:W} and assume $f$ is  either defined as in \eqref{eq:f} or by $f=1$. Then $W(x)\geq0$ and $W(x)=0$ if and only if $x\in\mathcal{C}$. The fact that $\M$ is closed implies that $\mathcal{C}$ is compact. The consensus manifold is Lyapunov stable by Proposition \ref{prop:not}.
\end{proof}



Note that geodesic consensus, Algorithm \ref{algo:intrinsic}, is trivially multistable since it is a bounded confindence model \cite{proskurnikov2018tutorial}. It is also multistable due to the presence of splay states, as described in Section \ref{sec:intrinsic}. Next, we show that this multistability extends from the cycle graph $\mathcal{H}_N$ to networks that are, roughly speaking, dominated by a large cycle.

Recall that $r(x)$ denotes the \emph{injectivity radius} at $x\in\M$, \ie the radius of the largest geodesic ball $\mathcal{B}_\varepsilon(x)\subset\M$ such that the exponential map $\exp_{x}$ is a diffeomorphism at $x$. Moreover, recall  $R:=\inf_{x\in\M}r(x)$. Suppose $\M$ is embedded in $\R^{n\times m}$ and let $\ma{X}:=\iota(x)$. Let $a(\ma{X})$ denote the radius of the largest ball defined in terms of the chordal distance,
\begin{align*}
\mathcal{A}_{a(\ma{X})}(\ma{X}):=\{\ma{Y}\in\M\,|\,\|\ma{X}-\ma{Y}\|\leq a(\ma{X})\},
\end{align*}
such that $\mathcal{A}_{a(\ma{X})}\subset\mathcal{B}_{r(x)}$. Let $A:=\inf_{\ma{X}\in\M}a(\ma{X})$. The following result relates $A$ to $R$.

\begin{lemma}\label{lem:balls}
Let $\M\subset\R^{n\times m}$ be a Riemannian manifold. For every pair $(\ma{X},\varepsilon)\in\M\times(0,\infty)$, any ball defined in terms of the geodesic distance,
\begin{align*}
\mathcal{B}_\varepsilon(\ma{X})=\{\ma{Y}\in\M\,|\,d_g(\ma{X},\ma{Y})\leq \varepsilon\}.
\end{align*}
contains a ball defined in terms of the chordal distance 
\begin{align*}
\mathcal{A}_{\delta(\varepsilon)}(\ma{X})=\{\ma{Y}\in\M\,|\,\|\ma{X}-\ma{Y}\|\leq\delta(\varepsilon)\},
\end{align*}
where the radius $\delta(\varepsilon)$ is strictly positive. Suppose $\M$ is closed, then $R=\inf_{\ma{X}\in\M}r(\ma{X})>0$ implies $A=\inf_{\ma{X}\in\M}a(\ma{X})>0$.

\end{lemma}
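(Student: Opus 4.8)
The plan is to prove the two assertions in turn: the pointwise containment by a soft comparison of the two metrics' topologies, and the uniform bound by upgrading that comparison with compactness.

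For the pointwise statement I would argue that the chordal and geodesic distances induce the \emph{same} topology on $\M$. Since $\M$ is embedded via $\iota$, the chordal distance $\|\ma{X}-\ma{Y}\|$ is the restriction of the ambient Euclidean distance, so it induces the subspace topology, which for an embedded submanifold coincides with the intrinsic manifold topology; and $d_g$ is a metric inducing that same intrinsic topology \cite{carmo1992riemannian}. Hence the open geodesic ball $\{\ma{Y}\in\M\mid d_g(\ma{X},\ma{Y})<\varepsilon\}$, being open intrinsically, is open in the chordal metric and therefore contains an open chordal ball about $\ma{X}$. Taking $\delta(\varepsilon)>0$ to be half that chordal radius would yield the closed inclusion $\mathcal{A}_{\delta(\varepsilon)}(\ma{X})\subseteq\mathcal{B}_\varepsilon(\ma{X})$. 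The argument is purely local: since connected components are open in the subspace topology, for small $\delta$ every $\ma{Y}$ with $\|\ma{X}-\ma{Y}\|\le\delta$ lies in the component of $\ma{X}$, so $d_g(\ma{X},\ma{Y})$ is finite.

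For the uniform statement I would first reduce to a single radius. Since $R=\inf_{x}r(x)\le r(x)$, we have $\mathcal{B}_R(\ma{X})\subseteq\mathcal{B}_{r(x)}(\ma{X})$ for every $\ma{X}$, so it suffices to produce one $A>0$ with $\mathcal{A}_A(\ma{X})\subseteq\mathcal{B}_R(\ma{X})$ for all $\ma{X}\in\M$; this forces $a(\ma{X})\ge A$ and hence $\inf_{\ma{X}}a(\ma{X})\ge A>0$. I would establish this by contradiction. If no such $A$ existed there would be sequences $(\ma[k]{X}),(\ma[k]{Y})\subset\M$ with $\|\ma[k]{X}-\ma[k]{Y}\|\to0$ but $d_g(\ma[k]{X},\ma[k]{Y})>R$. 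Compactness of the closed manifold $\M$ lets me pass to a subsequence with $\ma[k]{X}\to\ma[*]{X}$, and then $\ma[k]{Y}\to\ma[*]{X}$ as well because $\|\ma[*]{X}-\ma[k]{Y}\|\le\|\ma[*]{X}-\ma[k]{X}\|+\|\ma[k]{X}-\ma[k]{Y}\|\to0$. Applying the first part at the \emph{single} point $\ma[*]{X}$ with $\varepsilon=R/4$ supplies a $\delta_*>0$ below which chordal proximity to $\ma[*]{X}$ forces geodesic distance at most $R/4$; for $k$ large both $\ma[k]{X}$ and $\ma[k]{Y}$ fall inside this chordal $\delta_*$-ball, so the triangle inequality for $d_g$ gives
\[
d_g(\ma[k]{X},\ma[k]{Y})\le d_g(\ma[k]{X},\ma[*]{X})+d_g(\ma[*]{X},\ma[k]{Y})\le\tfrac{R}{4}+\tfrac{R}{4}<R,
\]
contradicting $d_g(\ma[k]{X},\ma[k]{Y})>R$.

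I expect the delicate step to be exactly this passage from a pointwise $\delta(\varepsilon)$ to a uniform $A$: a priori the pointwise radius could shrink to zero as $\ma{X}$ ranges over $\M$. What rescues the argument is that the first part encodes precisely the continuity statement \mydoubleq{chordal convergence implies geodesic convergence}, and compactness turns this pointwise continuity into the required uniformity through the limit point $\ma[*]{X}$ and the triangle inequality for $d_g$. The pointwise containment itself I regard as essentially formal once the equivalence of the two induced topologies is noted; the compactness-plus-limit-point step is the real obstacle.
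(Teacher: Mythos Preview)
Your argument is correct and close in spirit to the paper's. For the first assertion, the paper runs the same idea sequentially: assuming no such $\delta$ exists, it produces $\ma[j]{Y}\to\ma{X}$ chordally with $d_g(\ma[j]{Y},\ma{X})\geq\varepsilon$, and invokes continuity of $d_g$ for a contradiction. Your topological phrasing (the two metrics induce the same topology, so an open $d_g$-ball contains an open chordal ball) is the same fact repackaged.

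For the second assertion the paper also argues by contradiction and compactness, but more tersely: it takes a sequence $(\ma[i]{Z})$ with $a(\ma[i]{Z})\to0$, passes to a limit $\ma{Z}$ by compactness, and asserts that $a(\ma{Z})=0$, contradicting the first part. Your version unwinds what that last step leaves implicit: rather than appeal to any semicontinuity of $a$, you carry along witnesses $\ma[k]{Y}$ to the failure of containment, send both $\ma[k]{X}$ and $\ma[k]{Y}$ to a common limit $\ma[*]{X}$, and close with the triangle inequality for $d_g$ after applying the first part at $\ma[*]{X}$. This is the same compactness mechanism, but your execution makes explicit why the limit point inherits the contradiction; in effect your two-sequence argument is precisely the justification the paper's one-line conclusion $a(\ma{Z})=0$ needs.
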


\begin{proof}
Suppose the first statement is false. Then, for some $\varepsilon>0$ and every $\delta_j>0$, there exists an $\ma[j]{Y}\in\mathcal{A}_{\delta_j}(\ma{X})$ with $d_g(\ma[j]{Y},\ma{X})>\varepsilon$. Form a sequence $(\ma[j]{Y})_{j=1}^\infty$ of such points where $\lim_{j\rightarrow\infty}\delta_j=0$. It follows that $\lim_{j\rightarrow\infty}\ma[j]{Y}=\ma{X}$, but $\lim_{j\rightarrow\infty}d_g(\ma[j]{Y},\ma{X})\geq  \varepsilon$. This contradicts the continuity of $d_g$.

Suppose $A=0$, then there is a sequence $(\ma[i]{Z})_{i=1}^N$ such that $\lim_{i\rightarrow\infty}a(\ma[i]{Z})=0$. If $\M$ is closed, then $(\ma[i]{Z})_{i=1}^N$ has a subsequence which converges to some  $\ma{Z}\in\M$. That $a(\ma{Z})=0$ contradicts the first result of this theorem.
\end{proof}




\begin{theorem}\label{th:multistable}
Let $\M$ be a closed, multiply connected, smooth Riemannian manifold such that $R=\inf_{x\in\M}r(x)>0$. Suppose there is a cycle subgraph, $\mathcal{H}_K\subset\G$, $K\leq N$, and an $(x_i)_{i=1}^N\in\M^N$ such that the closed broken geodesic generated by $(x_i)_{i\in\V(\mathcal{H}_K)}$ is not homotopic to a point on $\M$. Then the following two implications hold:
\begin{itemize}
\item[i)] For geodesic consensus, suppose $(x_i)_{i=1}^N$ at $t=0$ satisfies 
\begin{align*}
V(x)=\tfrac12\!\sum_{\{i,j\}\in\E}\!w_{ij}d_g(x_i,x_j)^2<\tfrac{1}{2}\min_{\{i,j\}\in\E}w_{ij}(R-\varepsilon)^2,
\end{align*}
where $\varepsilon<R$, then the flow \eqref{eq:dxidt} does not converge to $\mathcal{C}$.

\item[ii)] For chordal consensus, $\iota:\M\hookrightarrow\R^{n\times m}$, suppose $(\ma[i]{X})_{i=1}^N:=(\iota(x_i))_{i=1}^N$ at $t=0$ satisfies 
\begin{align*}
U(\ma{X})=\tfrac12\sum_{\{i,j\}\in\E}w_{ij}\|\ma[i]{X}-\ma[j]{X}\|^2<\tfrac{1}{2}\min_{\{i,j\}\in\E}w_{ij}A^2,
\end{align*}
then the flow \eqref{eq:dXidt} does not converge to $\mathcal{C}$.
\end{itemize}
\end{theorem}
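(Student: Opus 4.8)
The plan is to argue by contradiction using a topological obstruction. Attached to the cycle subgraph $\mathcal{H}_K$ is the closed broken geodesic through the agents $(x_i)_{i\in\V(\mathcal{H}_K)}$, which by hypothesis is not null-homotopic at $t=0$. I will show that (a) this loop stays well defined along the whole trajectory, (b) its free homotopy class is invariant under the flow, and (c) if the flow converged to $\mathcal{C}$ the loop would have to shrink to a point and thus become null-homotopic, a contradiction. The argument for (i) and (ii) is identical apart from step (a).

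\textbf{Step (a): the cycle edge lengths stay small.} For geodesic consensus, $V$ is non-increasing along flow lines (Proposition \ref{prop:lines}). If some edge $\{i,j\}\in\E$ ever had $d_g(x_i,x_j)\geq R-\varepsilon$, then, since $f\equiv1$ on $[0,(R-\varepsilon)^2)$, we would get $V\geq\tfrac12 w_{ij}\!\int_0^{(R-\varepsilon)^2}\!f\,\diff s=\tfrac12 w_{ij}(R-\varepsilon)^2\geq\tfrac12\min_{\{k,l\}\in\E}w_{kl}(R-\varepsilon)^2$, contradicting the hypothesis on $V(0)$ together with monotonicity. Hence $d_g(x_i,x_{i+1})<R-\varepsilon\leq r(x_i)$ on every cycle edge for all $t\geq0$, so the geodesic $\gamma_i^t$ from $x_i(t)$ to $x_{i+1}(t)$ is unique and the closed broken geodesic $\gamma^t=\cup_{i\in\V(\mathcal{H}_K)}\gamma_i^t$ is well defined for every $t$. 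For chordal consensus the same reasoning applied to $U$ forces $\|\ma[i]{X}-\ma[j]{X}\|<A$ on every edge for all $t$; since $A\leq a(\ma[i]{X})$, Lemma \ref{lem:balls} gives $\ma[j]{X}\in\mathcal{A}_{a(\ma[i]{X})}(\ma[i]{X})\subset\mathcal{B}_{r(x_i)}(x_i)$, so again each cycle geodesic is unique and $\gamma^t$ is well defined for all $t$.

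\textbf{Steps (b) and (c).} Parametrize $\gamma_i^t(s)=\exp_{x_i(t)}\!\big(s\log_{x_i(t)}(x_{i+1}(t))\big)$ for $s\in[0,1]$ and concatenate to get $\gamma^t:[0,1]\to\M$. Since the flow exists and is continuous for all $t$ (Proposition \ref{prop:picard}) and $\exp,\log$ are continuous on the relevant domains, $(t,s)\mapsto\gamma^t(s)$ is continuous; hence $\{\gamma^t\}_{t\in[0,T]}$ is a free homotopy of loops for every $T$, so $\gamma^T$ is freely homotopic to $\gamma^0$ and in particular is not null-homotopic. Now suppose, for contradiction, the flow converges to $\mathcal{C}$. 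As $\G$ is connected, $\mathcal{C}$ is the diagonal, so $d_g(x(t),\mathcal{C})\to0$ forces every pairwise geodesic distance, in particular each cycle edge length, to tend to $0$; hence $l(\gamma^t)=\sum_{i\in\V(\mathcal{H}_K)}d_g(x_i(t),x_{i+1}(t))\to0$. Fix $T$ with $l(\gamma^T)<2R$; then every point of $\gamma^T$ lies within geodesic distance $l(\gamma^T)/2<R\leq r(p)$ of any chosen $p\in\gamma^T$, so $\gamma^T\subset\mathcal{B}_{r(p)}(p)$, which is diffeomorphic to a Euclidean ball (the exponential map being a diffeomorphism there) and therefore contractible. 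Thus $\gamma^T$ is null-homotopic, contradicting step (b). This proves both (i) and (ii).

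\textbf{Main obstacle.} The delicate part is the bookkeeping in step (a): one must be certain the energy bound truly keeps every cycle edge length below the injectivity radius for all time (and, in the chordal case, translate this correctly through Lemma \ref{lem:balls}), since otherwise the closed broken geodesic is neither well defined nor continuous and the topological argument collapses. Making the ``a short loop is contractible'' step precise, via the exponential chart centered at a point of the loop, is the other point needing care; with that scaffolding in place the homotopy-invariance core is routine.
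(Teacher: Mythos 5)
Your proof is correct and follows essentially the same route as the paper's: the monotonicity of the potential keeps every edge length below $R-\varepsilon$ (resp.\ below $A$, hence below the injectivity radius) for all time, so the closed broken geodesic over the cycle evolves by a continuous free homotopy, and convergence to $\mathcal{C}$ would contradict its nontrivial homotopy class. You are in fact somewhat more careful than the paper at the final step, making explicit --- via the observation that a loop of length $l(\gamma^T)<2R$ lies in a single contractible geodesic ball --- why a loop whose length tends to zero must become null-homotopic at some finite time, a point the paper only gestures at in the hint to its corollary.
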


\begin{proof}
Note that in the intrinsic case, 
\begin{align*}
d_g(x_i,x_j)^2\leq\tfrac{2}{w_{ij}}V<(R-\varepsilon)^2,
\end{align*}
wherefore $V$ is of the simplified form \eqref{eq:V2} at $t=0$. Moreover, $V$ remains of the simplified form for all future times since it decreases along flow lines. 

Likewise, for chordal consensus, the inequality $\|\ma[i]{X}-\ma[j]{X}\|<A$ holds for all
$t\in[0,\infty)$. By the definition of $A$, this implies that $d_g(\ma[i]{X},\ma[j]{X})<R$ for all $t\in[0,\infty)$. 

For both geodesic and chordal consensus, it follows that $\exp_{x_i}$ is a diffeomorphism on a set that includes $x_j$ for all $t\in[0,\infty)$. In particular, the geodesic $\gamma_{ij}$ connecting $x_i$ and $x_j$ is a continuous function of $t$. Moreover, the closed broken geodesic $\gamma$ interpolating $(x_i)_{i\in \V(\mathcal{H}_K)}$ is a continuous function of time for $t\in[0,\infty)$. The system evolution hence corresponds to a continuous deformation of $\gamma$. Suppose that the system reaches consensus, \ie that $\gamma$ converges to a single point. This contradicts the assumption of the theorem that $\gamma$ is not homotopic to a point at time $0$.\end{proof}





\begin{corollary}
The geodesic and chordal consensus systems, \eqref{eq:dxidt} and \eqref{eq:dXidt}, are multistable in the sense of Definition \ref{def:multistable} under the assumptions of Theorem \ref{th:multistable}.
\end{corollary}

\begin{proof}
By Proposition \ref{prop:consensus} there is some ball $\mathcal{B}_\delta(\mathcal{C})\subset\M^N$ from which the system cannot escape an arbitrarily small ball $\mathcal{B}_\varepsilon(\mathcal{C})\subset\M^N$. If we view the system as a set of $N$ points on $\M$, then, at any time $t$, there is a ball $\mathcal{B}_{\rho(\varepsilon)}(y(t))$ around some point $y(t)$ such that all  $x_i(t)\in\mathcal{B}_{\rho(\varepsilon)}(y(t))$.

If the system is not multistable, then, by Definition \ref{def:multistable} and Theorem \ref{th:multistable}, there is some time $\tau$ such that $\mathcal{B}_{\rho(\varepsilon)}(y(\tau))$ contains a closed curve that is not homotopic to a point. Note that ${\rho(\varepsilon)}\rightarrow0$ as $\varepsilon\rightarrow0$. For $\rho<R/2$, the open ball  $\mathcal{B}_{\rho(\varepsilon)}(y(\tau))$ is homotopy equivalent to $\R^d$, where $d$ is the dimension of $\M$. However, $\R^d$ cannot contain a curve that is not homotopic to a point, so we have a contradiction.\end{proof}

\begin{example}\label{ex:Lohe}
The cycle graph $\mathcal{H}_N$ satisfies the requirements of Theorem	\ref{th:multistable}, as do generalizations thereof such as the circulant graphs, $\mathcal{F}_{N,k}:=(\{1,\ldots,N\},\E_{N,k})$, with
\begin{align*}
\mathcal{E}_{N,k}:=\{\{i,j\}\,|\,|i-j|\leq k\},
\end{align*}
where we use addition modulo $N$. Note that $|\E_{N,k}|=kN$ being linear in $N$ is important. Position the agents so that the closed broken geodesic generated by $(\ma[i]{X})_{i=1}^N$ is an approximation of $\gamma$. More precisely, let the agents be approximately equidistantly spaced on small tubular neighborhood of $\gamma$ on $\M$ whereby $\|\ma[i]{X}-\ma[i\pm j]{X}\|\approx jl(\gamma)/N$. Then
\begin{align*}
U(\ma{X})=\tfrac12\!\!\!\!\!\!\sum_{\{i,j\}\in\E_{N,k}}\!\!\!\!\!\!w_{ij}\|\ma[i]{X}-\ma[j]{X}\|^2\approx\tfrac12\tfrac{l(\gamma)^2}{N^2}\!\!\!\!\!\!\sum_{\{i,j\}\in\E_{N,k}}\!\!\!\!\!\!w_{ij}\sum_{j=1}^k j^2.
\end{align*}
The assumptions of Theorem \ref{th:multistable} are satisfied for pairs $(N,k)$ with $N\gg k$ since $U\rightarrow0$ as $N\rightarrow\infty$ for any fixed $k$ under the assumption that $\sum_{\{i,j\}\in\E_{N,k}}w_{ij}\in \smallO(N^2)$ due to $|\E_{N,k}|=kN$. For example, with $w_{ij}=1$ we get $\sum_{\{i,j\}\in\E_{N,k}}w_{ij}=kN\in\mathcal{O}(N)\subset\smallO(N^2)$. Intuitively speaking, if we fix $k$, then we can keep increasing $N$ by $1$ and move the agents a little closer to each other each time until the assumptions hold.
\end{example}

\begin{example}\label{ex:final}
The Lohe models \eqref{eq:Lohe} and \eqref{eq:LoheSO2n} are multistable since $\mathsf{U}(n)$ and $\varphi\,\mathsf{U}(n)$ are multiply connected.
\end{example}

Note that Example \ref{ex:final} is a novel result that extends some findings in \cite{deville2018synchronization} from $\mathsf{SO}(n)$ to $\mathsf{U}(n)$. This extension is not trivial since $\mathsf{SO}(n)$ is a subset of zero measure in $\mathsf{U}(n)$.


%
%


\bibliographystyle{unsrt}
\bibliography{IEEEbib}


\end{document}